\documentclass[reqno]{amsart}
%reqno command is to shift equation numbering to right

%%%%%%%%%%%%%%%%%%%%%%%%%%Margin setting
\voffset=5mm
\oddsidemargin=5pt \evensidemargin=5pt
\headheight=9pt     \topmargin=-24pt
\textheight=655pt   \textwidth=463.pt

\usepackage{amsmath,amssymb,amsthm,harpoon,accents}

\usepackage[foot]{amsaddr} % to make emails appear in page 1
\usepackage{mathrsfs} %to use \mathscr
\usepackage{euscript}   %to use \EuScript font
\usepackage{url} %to include urls
\usepackage{hyperref} %to enable hyperlinks in the article
\usepackage{enumitem} %to enable several settings for enumeration
\usepackage{color} %to enable color
\usepackage{bbm} % to enable mathbb font for 1

\hypersetup{colorlinks=true, citecolor=darkgreen, linkcolor=darkblue}
\definecolor{darkgreen}{rgb}{0.173, 0.706, 0.173}
\definecolor{darkblue}{rgb}{0.0,0,0.7}

\usepackage[frak=esstix]{mathalfa} %using mathalfa to set commands mathcal, mathfrak, etc.

\usepackage{tikz} % to draw pictures
\usepackage{tikz-cd} %to draw commutative diagram
\usetikzlibrary{arrows} %to draw nice arrows
\usetikzlibrary{decorations.markings} % to draw circles with markings
\usepackage{standalone} %to externalize tikz load

%to add lines for the draft mode
%this package will be disabled in the final version
%\usepackage{lineno}
%\linenumbers

% this corrects the spacing around \left( and \right) and other similar expressions
% see: https://tex.stackexchange.com/questions/2607/spacing-around-left-and-right
\let\originalleft\left
\let\originalright\right
\renewcommand{\left}{\mathopen{}\mathclose\bgroup\originalleft}
\renewcommand{\right}{\aftergroup\egroup\originalright}

%This is to create nice arrows

\makeatletter % \overrightharpoon is like \overrightarrow but with a harpoon in place of the arrow
\newcommand{\overrightharpoon}{%
  \mathpalette{\overarrow@\rightharpoonfill@}}
\def\rightharpoonfill@{\arrowfill@\relbar\relbar\rightharpoonup}
\makeatother

\makeatletter % \overrightsmallharpoon has a smaller harpoon than \overrightharpoon
\newcommand{\overrightsmallharpoon}{\mathpalette{\overarrowsmall@\rightharpoonfill@}}
\def\rightharpoonfill@{\arrowfill@\relbar\relbar\rightharpoonup}
\newcommand{\overarrowsmall@}[3]{%
  \vbox{%
    \ialign{%
      ##\crcr
      #1{\smaller@style{#2}}\crcr
      \noalign{\nointerlineskip}%
      $\m@th\hfil#2#3\hfil$\crcr
    }%
  }%
}
\def\smaller@style#1{%
  \ifx#1\displaystyle\scriptstyle\else
    \ifx#1\textstyle\scriptstyle\else
      \scriptscriptstyle
    \fi
  \fi
}
\makeatother

\newcommand{\arrow}[1]{\overrightsmallharpoon{#1}}

%User defined commands

\newcommand{\Ascr}{\mathscr{A}} %a collection of undirected subgraphs 
 %a collection of directed subgraphs 
 % Automorphism groups

\newcommand{\Bl}{\arrow{{B}}} %finite sets of directed edges
 %generic Borel sigma algebra
 %Borel sigma algebra    induced by the elementary cylinder sets on \Omegal(G)
\newcommand{\Bscr}{\mathscr{B}} %an element of the Borel sigma-algebra
\newcommand{\Bscrl}{\arrow{\mathscr{B}}} %a measurable set of rotor configurations

\newcommand{\Cscrl}{\arrow{\mathscr{C}}} 
 %the unique cycle in a forest 
 %set of rotor configurations

 %directed unicycle new version
 %a collection of directed subgraphs 
 % directed spanning trees of a graph
 % directed spanning unicycles of a graph

\newcommand{\eb}{\mathbf{e}} %standard bases of R^d
 %uniform probability on the outgoinf edges of a root
 %uniform probability on the outgoinf edges of a root
 %a directed edge
\newcommand{\El}{\arrow{E}} %set of directed edges

 %directed random forest 
 %undirected random forest 
\newcommand{\Fl}{\arrow{{F}}} %directed forest 
\newcommand{\Fscr}{\mathscr{F}} %Borel sigma-algebra
\newcommand{\Fscrl}{\arrow{\mathscr{F}}} %Borel sigma-algebra for directed graphs

 %a group 

\newcommand{\Hl}{\arrow{H}} %oriented subgraph of $G$ 

\DeclareMathOperator{\id}{\mathnormal{o}} % identity element of a group
 % set of incoming edges of v

\DeclareMathOperator{\Kl}{\arrow{K}} %set of directed edges 

\DeclareMathOperator{\LE}{{\mathsf{LE}}} %loop erasure

\newcommand{\N}{\mathbb{N}} %Integers
\newcommand{\Nbh}{{N}} % the set of neighbors of a vertex
\DeclareMathOperator{\nol}{\mathbf{0}} % origin of Z^d

 %power set of the set of directed edges of G
 % set of outgoing edges of v

 %probability measure associated to uniform undirected spanning forest
 %random directed paths
 %probability measure associated to uniform directed unicyles
 %the set of spanning directed unicycles with only one component

\newcommand{\R}{\mathbb{R}}
\newcommand{\rel}{\widehat{\rho}} 
 %reverse loop erasure
\newcommand{\Rot}{\text{Rot}} %Rotation matrix 
 %a collection of rotor configurations 

\newcommand{\satubb}{\mathbbm{1}} % indicator function
 %vector s 
\newcommand{\Sc}{\mathcal{S}} %a generating set of a group
 %random sets of directed edges 
 %sets of directed edges
%source vertex of an edge

 %set of directed spanning trees
\newcommand{\Tl}{\arrow{{T}}} %a directed tree
%target vertex of an edge

 %directed unicycle 
\newcommand{\Ul}{\arrow{{U}}} %directed unicycle 
 %the set of spanning directed unicycles 
 %a collection of undirected subgraphs 

 %vector v 

 %random  walk 
%\newcommand{\Wl}{\arrow{{W}}} %directed walk 
\DeclareMathOperator{\wsf}{\mathsf{{WSF}}} %the distribution of wired spanning forest
 %weight

\newcommand{\xb}{\mathbf{x}} %vector x 

\newcommand{\yb}{\mathbf{y}} %vector y 

\newcommand{\Z}{\mathbb{Z}} %Integers
 %vector z 

%commands for USF, etc.
 % e.g., \usf{G} typesets \textnormal{USF}(G)
 % e.g., \usfr{r}{G} typesets \textnormal{USF}_r(G)
 % e.g., \usfzd{4} typesets \textnormal{USF}\left(\mathbb{Z}^4\right)
%\newcommand{\arrow}[1]{\overrightarrow{#1}} 

 %Wired spanning forest rooted toward a vertex
 %Wired spanning forest rooted toward a vertex
 %Wired spanning forest rooted toward a vertex

%probability and expectation
\newcommand{\E}[1]{\mathbb{E}\left[ #1 \right]}

 %Probability measure
\newcommand{\Cond}[2]{\mathbb{E}\left[#1\mid #2\right]}

%\newenvironment{solution}
%  {\begin{proof}[Solution]}
%  {\end{proof}}
%\newtheorem{theorem}{Theorem}
%\newtheorem{lemma}{Lemma}
%\newtheorem{proposition}{Proposition}

%theoremstyle definition, with section numbers
\newtheorem{theorem}{Theorem}[section]
\newtheorem{lemma}[theorem]{Lemma}
\newtheorem{proposition}[theorem]{Proposition}

%global options for enumerate
\setlist[enumerate]{
label=\textnormal{({\roman*})},
ref={\roman*}}

%to make the theorem name bold
\makeatletter
\def\th@plain{%
  \thm@notefont{}% same as heading font
  \itshape % body font
}
\def\th@definition{%
  \thm@notefont{}% same as heading font
  \normalfont % body font
}
\makeatother

%theoremstyle for examples
\theoremstyle{definition}

\newtheorem{problem}{Problem}[section]
\newtheorem{remark}[theorem]{Remark}
%this is to add a triangle at the end of an example
%\newenvironment{remark}
%  {\pushQED{\qed}\renewcommand{\qedsymbol}{$\triangle$}\remarkx}
%  {\popQED\endremarkx}

\newtheorem{example}[theorem]{Example}

   \newtheorem{definition}[theorem]{Definition}
%\newenvironment{definition}
%  {\pushQED{\qed}\renewcommand{\qedsymbol}{$\triangle$}\definitionx}
%  {\popQED\enddefinitionx}

%enviroment to repeat theorem
\makeatletter 
\newtheorem*{rep@theorem}{\rep@title}
\newcommand{\newreptheorem}[2]{%
\newenvironment{rep#1}[1]{%
 \def\rep@title{#2 \ref{##1}}%
 \begin{rep@theorem}}%
 {\end{rep@theorem}}}
\makeatother

\newreptheorem{theorem}{Theorem} 
\newreptheorem{conjecture}{Conjecture} 
\newreptheorem{corollary}{Corollary}
\newreptheorem{lemma}{Lemma}
\newreptheorem{proposition}{Proposition}

\makeatletter
\newcommand{\mylabel}[2]{#2\def\@currentlabel{#2}\label{#1}}
\makeatother

\newcounter{numberstationarity} % define the scratch counter
\newcounter{numberergodic} % define the scratch counter
\newcounter{numberscalinglimit} % define the scratch counter
\newcounter{numberprops} % define the scratch counter
\begin{document} 
\title{Random walks with local memory}
% Random walks with local memory and uniform oriented spanning forests of $\mathbf{Z}^d$
 %\vspace{-4ex}}%replace X with the appropriate number
%\author{Swee Hong Chan \and  Lila Greco \and  Lionel Levine \and  Peter Li} 

\author{Swee Hong Chan}
\address[Swee Hong Chan]{Department of Mathematics, UCLA, Los Angeles CA, corresponding author.}
\email{\texttt{sweehong@math.ucla.edu}}

\author{Lila Greco}
\address[Lila Greco]{Berkshire Hathaway Specialty Insurance, Boston MA.}
\email{\texttt{ecg83@cornell.edu}}

\author{Lionel Levine}
\address[Lionel Levine]{Department of Mathematics, Cornell University, Ithaca NY.}
\email{\texttt{levine@math.cornell.edu}}

\author{Peter Li}
\address[Peter Li]{Department of Economics, New York University, New York NY.}
\email{\texttt{bl2403@nyu.edu}}

\thanks{Lila Greco was partially supported by NSF grant DGE-1650441, Lionel Levine was partially supported by NSF grant DMS-1455272.}

 \date{\today}

 \keywords{random walks in random environment,  rotor-router model, uniform spanning forest, wired spanning forest, scaling limit}

\begin{abstract}
We prove a quenched invariance principle for a class of random walks in random environment on $\mathbb{Z}^d$, where the walker alters its own environment.
The environment consists of an outgoing edge from each vertex. The walker updates the edge $e$ at its current location to a new random edge $e'$ (whose law depends on $e$) and then steps to the other endpoint of $e'$.
We show that a native environment for these walks (i.e., an environment that is stationary in time from the perspective of the walker) consists of the wired uniform spanning forest oriented toward the walker, plus an independent outgoing edge from the walker. 
\end{abstract}

\keywords{random walk, random environment, rotor walk, rotor-router, uniform spanning forest, wired spanning forest, stationary distribution, scaling limit, Brownian motion}

\subjclass[2010]{60G42, 60F17, 60G10, 60J10, 60J65, 60K37, 82C41}

\maketitle

\section{A random environment altered by the walker}\label{section: intro}

Label each site of $\Z^2$ with either `H' or `V'.  A walker starts at the origin.  At each discrete time step the walker resamples the label at its current location (changing `H' to `V' and `V' to `H' with probability $q$, independent of the past) and then takes a mean zero horizontal step if the new label is `H' and a mean zero vertical step if the new label is `V'.  
%We do not know how to determine whether such a walk is recurrent or transient!  Nevertheless, 
We will show (see Theorem~\ref{theorem: intro scaling limit} below) that, for a certain distribution on initial labels, the scaling limit of the walk is a standard planar Brownian motion.

The walk just described is an example of a \emph{random walk with local memory} on a graph $G$.
 Each vertex of $G$ stores one bit of information in its label. For each vertex $x$ that the walk visits, the label of $x$ remembers whether the most recently traversed outgoing edge from $x$ was horizontal or vertical.  This memory in turn affects the distribution of the edge traversed the next time the walker returns to $x$.  One can consider more complicated forms of local memory (e.g., that remember several past visits) but they all essentially reduce to the standard \emph{retrospective form}, i.e.,  each vertex $x$ is labeled by an outgoing edge from $x$ (see Appendix~\ref{section: hidden random walks} for the reduction).  At each discrete time step, the walker updates the label $e$ of its current location to a new random edge $e'$ (whose law depends on $e$) and then steps to the other endpoint of $e'$.
%See \S\ref{rotor walks} for a formal definition of random walks with local memory.

Pinsky and Travers~\cite{PT17} and  Kosygina and Peterson~\cite{KP17} study random walks with local memory  in one dimension under the name ``Markovian cookie stacks'',
where the labels evolve following the transition rules of a prescribed Markov chain for each each vertex.
(These Markov chains are assumed to be independent; see Travers~\cite{Tra18} for the case when they are not independent.) 
In particular, the latter  characterizes when such a walk is recurrent, transient non-ballistic, or ballistic;
and  derives a central limit theorem for the transient case.
The methods used in \cite{KP17} are based on the generalized Ray-Knight theory developed by T\'{o}th~(see \cite{Tot99} and references therein) for generalized reinforced/repelling random walks, and are limited only to one dimension. 
 The aim of this paper is to begin the study of these walks in higher dimensions, by identifying a native environment and proving an invariance principle.

%Note: Travers \cite{Tra18} studies a *spatially* Markovian environment of bounded cookie stacks (see Remark 1.8): not our situation at all.

%A side effect of the way ellipticity is defined in \cite{KP17} is to hide the tree structure (because states do not determine last exits).  In our walks, states do determine last exits, so our walks are not elliptic in the sense of \cite{KP17}.  Given a walk elliptic in the sense of \cite{KP17} one can convert it to a retrospective walk by the usual trick of enlarging the state space.

In analyzing random walks with local memory in higher dimensions, we take our inspiration from the theory of random walk in random environment~\cite{Zei04,Szn04}, in which the environment affects the motion of the walker but the walker does not affect the environment. %cite Zeitouni survey and/or other RWRE papers?
 In our walks, a new difficulty is that the walker alters its own environment.
%Despite this difficulty, the standard tools in this field are still applicable under some regularity assumptions, and that  gives us the following functional limit theorem.

%Let $G:=(V,E)$ be a simple connected graph that is locally finite (every vertex has finite degree).

\subsection{Main results}

An interesting feature of random walk with local memory is that the walker organizes its environment to form a tree. Indeed, when the walk is expressed in retrospective form, the local state at each previously visited vertex is the last exit edge, so the edges at visited vertices form a tree oriented toward the walker.  From this observation, it is natural to use the wired  spanning forest (defined below) to construct  a \emph{native environment}~(i.e., an invariant measure for the environment viewed from the perspective of the walker; see Definition~\ref{definition: native environment}).

%The wired spanning forest is defined as follows.
Let $G$ be a simple connected graph that is locally finite (i.e., each vertex has a finite degree). 
Let  $W_1\subseteq W_2 \subseteq \ldots$ be finite connected subsets of $V(G)$  such that $\bigcup_{n=1}^\infty W_n=V(G)$.
Let $G_n$ be obtained from $G$ by identifying all vertices outside $W_n$ to one new vertex, and let $\mu_n$ be the uniform measure on spanning trees of $G_n$.
The \emph{wired uniform spanning forest}, denoted by  $\mathsf{WUSF}$, is then the unique infinite-volume limit of  $\mu_n$.
%See \cite{BLPS01,LP16} for more details.

Fix a vertex $o$ of $G$ as the initial location of the walker.
To build an initial environment from $\mathsf{WUSF}$, we orient the connected component of $o$ in the $\mathsf{WUSF}$ toward  $o$, orient all other components toward infinity, and add an independent outgoing edge from the $o$.
Note that there might be more than one way to orient a component toward infinity if it has more than one end;
we will orient them using the orientation given by Wilson's method rooted toward infinity~\cite{BLPS01}.
We denote by  $\arrow{\mathsf{WUSF}}^+$ the resulting initial environment.
This environment is a {native environment}
under the following assumptions.

Assume that $G$ is a simple (undirected) Cayley graph of a finitely generated group.
% (i.e., an environment stationary in time from the perspective of the walker; see \S\ref{stationarity}).
% turns out to be a native environment if $G$ is a Cayley graph and the  random walk with local memory 
%is {transitive} and {uniform}.
A random walk with localy memory is \emph{transitive} if every vertex follows the same rule in updating its local memory; see~\eqref{equation: transitive}.
A random walk with local memory is \emph{uniform} if,  averaging over all  initial labels, every outgoing edge of the current location is equally likely to be the next label.
We remark that we actually prove the main results under a weaker uniformity assumption
called \emph{$c$-stationarity}~(see~\eqref{equation: c-stationary}),
and we only use the uniformity assumption in this section to simplify the notation. 
% uniformity assumption can be replaced with a weaker assumption called \emph{$c$-stationarity}~(see \eqref{equation: c-stationary}), and we simply restrict to the uniformity assumption here to simplify the notation.

\smallskip

\begin{theorem}\label{theorem: intro stationarity}
Consider a random walk with local memory  on a simple Cayley graph that is transitive and uniform.
Then $\arrow{\mathsf{WUSF}}^+$ is a native environment.
%Suppose that the initial environment is given by $\arrow{\mathsf{WUSF}}^+$.
%Then, for any $n\geq 0$,
%the environment at the $n$-th step of the walk {viewed from the perspective of the walker}  also follows the law of $\arrow{\mathsf{WUSF}}^+$.
\end{theorem}
\setcounter{numberstationarity}{\value{theorem}} % Store value of 'theorem' counter

\smallskip

We prove  Theorem~\ref{theorem: intro stationarity} in \S\ref{stationarity}   by  proving an analogous statement for finite graphs, and then passing to a limit.
%See Theorem~\ref{theorem: intro stationarity} for a more general version that does not require the uniformity assumption.

It turns out that $\arrow{\mathsf{WUSF}}^+$ satisfies a stronger property, namely that it is an \emph{ergodic native environment} (i.e., an ergodic measure for the environment viewed from the perspective of the walker; see Definition~\ref{definition: ergodic environment}), under the additional assumption that the random walk with local memory is \emph{elliptic} (i.e., every neighbor of the current location is visited next with positive probability; see~\eqref{equation: elliptic}).

\smallskip

\begin{theorem}\label{theorem: intro ergodic}
	Consider a random walk with local memory  on a simple Cayley graph that is transitive, uniform, and elliptic.
	Then $\arrow{\mathsf{WUSF}}^+$ is an ergodic native environment.
	%Suppose that the initial environment is given by $\arrow{\mathsf{WUSF}}^+$.
	%Then, for any $n\geq 0$,
	%the environment at the $n$-th step of the walk {viewed from the perspective of the walker}  also follows the law of $\arrow{\mathsf{WUSF}}^+$.
\end{theorem}
\setcounter{numberergodic}{\value{theorem}} % Store value of 'theorem' counter

\smallskip

We prove  Theorem~\ref{theorem: intro ergodic} in \S\ref{subsection: proof of ergodic native environment} 
through a delicate combinatorial argument that makes use of the tail triviality of $\mathsf{WUSF}$.
We believe that the ellipticity assumption is not necessary for the conclusion of Theorem~\ref{theorem: intro ergodic}; see \S\ref{subsection: dropping ellipticity}.

 Our next result is the following functional CLT for when
 $G$ is a \emph{lattice graph} in $\R^d$ (i.e., a Cayley graph such that $V(G)$ is a subgroup of $\R^d$ with vector addition as the group operation). 
A random walk with local memory is  a \emph{martingale} if,
conditioned on the present location and label,
the expected next location of the walker is equal to the present location; see~\eqref{equation: martingale}.
%;
%In the case when $G$ is the integer lattice $\Z^d$ $(d
%\geq 1)$
%and the random walk with local memory is a martingale and is elliptic,
%we can prove an almost sure invariance principle.
%
%and \emph{elliptic} means that, at any step of the walk,
%any neighbor of the current location is visited next with positive probability.

For every outgoing edge $e$ of the initial location $o$,
let $Y_e \in \R^d$ be the location of the walker after one step of the walk, assuming $e$ is the initial label at $o$.
We denote by $\Gamma_e$ the $d \times d$ covariance matrix $\E{Y_e Y_e^\top}$, and  
 by $\Gamma$ the average of covariance matrices of outgoing edges of $o$. 

\smallskip

\begin{theorem}\label{theorem: intro scaling limit}
Consider a random walk with local memory on a simple lattice graph in $\R^d$  that is transitive, uniform, and is a martingale.
Suppose that the initial environment is an ergodic native environment $\pi$.
Then, for   almost every environment sampled from $\pi$, 
the trajectory of the walker scales to a  Brownian motion in $\R^d$.
That is to say, 
\[  \frac{1}{\sqrt{n}}  ({X_{\lfloor nt \rfloor}})_{t \geq 0}  \  \overset{ n \to \infty}{ \Longrightarrow} \  B(t).\]
Here $X_{\lfloor nt \rfloor}$ is the location of the walker at the $\lfloor nt \rfloor$-th step of the walk,
 $B(t)$ is a   Brownian motion in $\R^d$ with diffusion matrix $\Gamma$, 
 and the convergence is weak convergence in the Skorohod space $D_{\R^d}[0,\infty)$.
\end{theorem}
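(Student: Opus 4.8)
The plan is to recognize the walk itself as a martingale and to apply the multidimensional functional martingale central limit theorem, using the stationarity of the environment seen from the walker (Theorem~\ref{theorem: intro stationarity}) to pin down its quadratic variation. Fix notation for the \emph{environment seen from the walker}: let $\omega_n$ denote the configuration $\arrow{\mathsf{WUSF}}^+$ recentered so that the walker sits at the origin at time $n$. Since the law of the next step depends only on the label at the walker's current site, the increment $\xi_{n+1}:=X_{n+1}-X_n$ has a conditional law that is a deterministic function of $\omega_n$ alone. The martingale hypothesis says exactly that, given the current label $e$, the expected displacement $\sum_f p(e,f)\,\vec f$ over the resampled edges $f$ vanishes; hence $\Cond{\xi_{n+1}}{\mathcal{F}_n}=0$ for the natural filtration $\mathcal{F}_n$ (generated by the environment together with the resampling choices up to time $n$), so $(X_n)$ is a martingale under the quenched measure $P_\omega$ for \emph{every} initial environment $\omega$. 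Because the steps lie among the finitely many edge-vectors of the Cayley graph, the increments are uniformly bounded, so after the $1/\sqrt n$ rescaling the conditional Lindeberg condition is immediate; ellipticity will later guarantee that the limiting covariance is non-degenerate.

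The heart of the argument is the quadratic variation. Writing $a(\omega):=\Cond{\xi_1\xi_1^{\top}}{\omega_0=\omega}$, a bounded function of the environment seen from the walker, one has $\langle X\rangle_n=\sum_{k=0}^{n-1}a(\omega_k)$. By Theorem~\ref{theorem: intro stationarity}, under the annealed measure $\mathbb{P}(\cdot)=\int P_\omega(\cdot)\,d\mu(\omega)$, with $\mu$ the law of $\arrow{\mathsf{WUSF}}^+$, the sequence $(\omega_k)_{k\ge 0}$ is stationary. I would next prove that it is in fact \emph{ergodic}; granting this, Birkhoff's theorem gives $\tfrac1n\langle X\rangle_n\to\Sigma:=\Esub{\mu}{a}$ for $\mathbb{P}$-almost every trajectory, and a Fubini argument upgrades this to: for $\mu$-almost every initial environment $\omega$, $\tfrac1n\langle X\rangle_{\lfloor nt\rfloor}\to t\Sigma$ in $P_\omega$-probability. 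This is the clean payoff of the martingale structure, which lets the annealed ergodic theorem be transferred to the quenched setting at no extra cost. Feeding the bounded increments and this quenched quadratic-variation limit into the functional martingale CLT then yields, for $\mu$-a.e.\ $\omega$, the quenched convergence $\tfrac{1}{\sqrt n}(X_{\lfloor nt\rfloor})_{t\ge 0}\Rightarrow B_\Sigma$ in $D_{\R^d}[0,\infty)$.

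Two points remain. To identify $\Sigma$ I would use uniformity: averaged over the stationary label distribution at the walker, each outgoing edge is equally likely, so $\Sigma=\tfrac1{2d}\sum_f \vec f\,\vec f^{\top}$ is a scalar multiple of the identity by the symmetry of $\Z^d$, and with the normalization of the statement the limit is the standard Brownian motion $B(t)$. The genuine obstacle is the ergodicity of the environment process $(\omega_k)$: without it, Birkhoff only yields a random, invariant limit for $\tfrac1n\langle X\rangle_n$, and the scaling limit would be a variance mixture of Brownian motions rather than $B(t)$. I expect to establish ergodicity by showing that the invariant $\sigma$-field of the chain is trivial, reducing — through ellipticity, which lets the walker reach every translate — to the spatial mixing (tail-triviality under $\Z^d$-translations) of the wired uniform spanning forest, a known property of $\arrow{\mathsf{WUSF}}$ on $\Z^d$. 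The delicate feature is the interplay between the evolving orientation of the forest toward the moving walker and the walker's own exploration, and controlling this coupling is where the main work lies.
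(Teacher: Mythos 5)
Your proposal is correct and follows essentially the same route as the paper's: a quenched martingale CLT with bounded increments, stationarity of the environment seen from the walker via Theorem~\ref{theorem: intro stationarity}, ergodicity of that environment chain deduced from ellipticity plus tail triviality of the wired spanning forest, and identification of the covariance matrix as $\frac{1}{d}I_d$ using uniformity together with stationarity of the local label distribution. The only minor differences are cosmetic: the paper applies a pointwise ergodic theorem for Markov chains directly (avoiding your Birkhoff-plus-Fubini upgrade), and its ergodicity argument (Lemma~\ref{lemma: tail triviality}) uses ellipticity to show every invariant set agrees mod null with an \emph{edge-tail} event of the rotor configuration—rather than appealing to translation-mixing—before invoking tail triviality of $\arrow{\wsf}^+$.
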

\setcounter{numberscalinglimit}{\value{theorem}} % Store value of 'theorem' counter

\smallskip

%See Theorem~\ref{scaling limit for elliptic rotor walk} in \S\ref{SLLN} for a more general version of this theorem that does not require the uniformity assumption.

In particular, Theorem~\ref{theorem: intro scaling limit} applies 
to the `H,V'-walk described in the beginning with $q$ strictly between $0$ and $1$.
We prove Theorem~\ref{theorem: intro scaling limit} 
in \S\ref{SLLN} by using  
  standard tools in random walks in random environments,
namely the martingale CLT and the pointwise ergodic theorem, and 
we illustrate the flavor with the `H,V'-walk  here. By the martingale CLT,
the problem reduces to showing that  the
walker encounters the label `V'  half of the time, i.e.,
\begin{equation}\label{equation: SLLN}
  \frac{1}{n} \sum_{i=0}^{n-1} \mathbbm{1}\{\text{the label  used by the walker at the $i$-th step is `V'}\} \longrightarrow \frac{1}{2},
  \end{equation}
in probability as $n \to \infty$.
The convergence in \eqref{equation: SLLN} in turn follows from the pointwise ergodic theorem.
Note that, in order to apply the pointwise ergodic theorem,
the initial environment needs to be native and ergodic,
and $\arrow{\mathsf{WUSF}}^+$ is such an environment by Theorem~\ref{theorem: intro ergodic}.
%The former follows from Theorem~\ref{theorem: intro stationarity} while the latter is derived from the tail triviality of the wired spanning forest.

\smallskip

Our final result is the following functional CLT, assuming  a stronger regularity condition on the RWLM  but requiring no condition on the initial environment. 
An RWLM has \emph{identical local covariances} if $\Gamma_e=\Gamma_{e'}$ for every outgoing edge $e$ of $o$.

\smallskip

\begin{proposition}\label{prop: scaling limit}
	Consider a random walk with local memory on a simple lattice graph in $\R^d$  that is transitive,  is a martingale, and has identical local covariances.
	Then, for {every} initial environment, 
	\[  \frac{1}{\sqrt{n}} \ ({X_{\lfloor nt \rfloor}})_{t \geq 0}    \overset{ n \to \infty}{ \Longrightarrow} \  B(t),\]
	where $X_{\lfloor nt \rfloor}$ is the location of the walker at the $\lfloor nt \rfloor$-th step of the walk, and
	$B(t)$ is a   Brownian motion in $\R^d$ with diffusion matrix $\Gamma$.
\end{proposition}
\setcounter{numberprops}{\value{theorem}} % Store value of 'theorem' counter
\smallskip

 We prove Proposition~\ref{prop: scaling limit} (under slightly weaker assumptions) in \S\ref{scaling limit} as a direct application of the martingale CLT.
In particular, Proposition~\ref{prop: scaling limit} applies to the random walk with local memory on the triangular lattice
where the mechanism is rotating the current outgoing edge by 60 degrees, 180 degrees, or 300 degrees, each with probability $\frac{1}{3}$; see Example~\ref{ex: triangular lattice}.
On the other hand, Proposition~\ref{prop: scaling limit} does \emph{not} apply to `H,V'-walk if $q\neq \frac{1}{2}$ (since  
$\Gamma_e$ is equal to 
 $\begin{bmatrix}
 	1-q & 0 \\
 	0 & q
 \end{bmatrix} $ 
if $e$ is a horizontal edge, and is equal to 
$\begin{bmatrix}
	q & 0 \\
	0 & 1-q
\end{bmatrix}$ if $e$ is a vertical edge). 
This necessitates results such as Theorem~\ref{theorem: intro scaling limit} that has weaker assumptions and does apply to a family of models that include  
`H,V'-walk.

%Thus a different result is needed for these models,
%which we partially achieve at the cost of starting the walk 
%with a specific  rotor configuration (see Theorem~\ref{theorem: intro scaling limit}). 
%
%This is in contrast with Theorem~\ref{theorem: intro scaling limit},
%which applies  to  `H,V'-walk albeit at the cost of starting with an ergodic native environment.

\medskip

\subsection{Other related work}\label{subsection: Other related work}
%\begin{enumerate}
%		[label=\textit{\thesection.\arabic*.},
%	ref={\thesection.\arabic*},
%	wide, labelwidth=!, labelindent=0pt]
%	\item 
\subsubsection{} When each vertex uses a deterministic rule to update its local memory,
the random walk with local memory is known as \emph{rotor walk}  (discovered independently by \cite{WLB96, PDDK96, Pro03}).
In this model, each vertex is given a prescribed cyclic ordering on its outgoing edges,
and for every update the vertex changes the current edge to the next edge in the cyclic order.
A fundamental difficulty with rotor walk is its lack of randomness: 
For example, it is an open problem to prove that the rotor walk in $\Z^2$ with i.i.d.\ uniform initial rotors is recurrent; see \cite{HLM08, FLP16} for an exposition of this and related problems.
%See also  \cite{Cha20} (the sequel to this paper), where the same problem is partially answered for the `H'--`V' walk.

{\renewcommand{\arraystretch}{-10}
\begin{figure}[h!]
\begin{tabular}{c c c}
\hspace{-0.5 cm}
\includegraphics[scale=0.38]{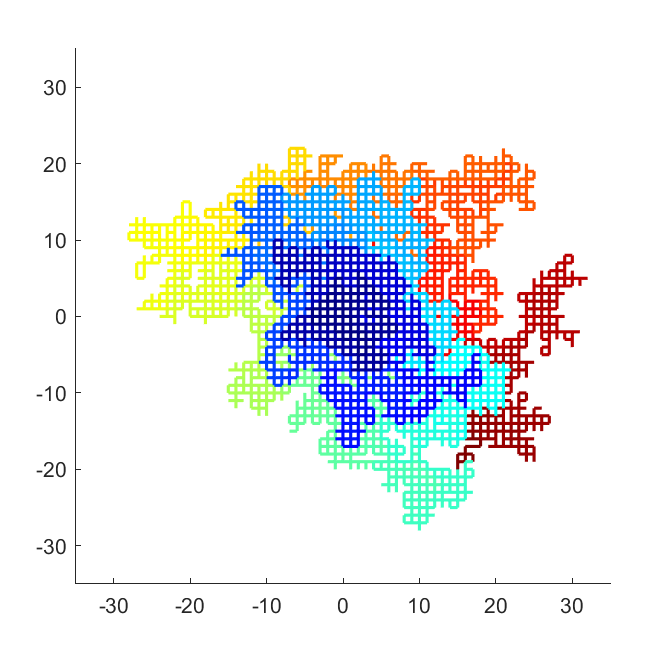} & \hspace{-0.25cm}  
\includegraphics[scale=0.38]{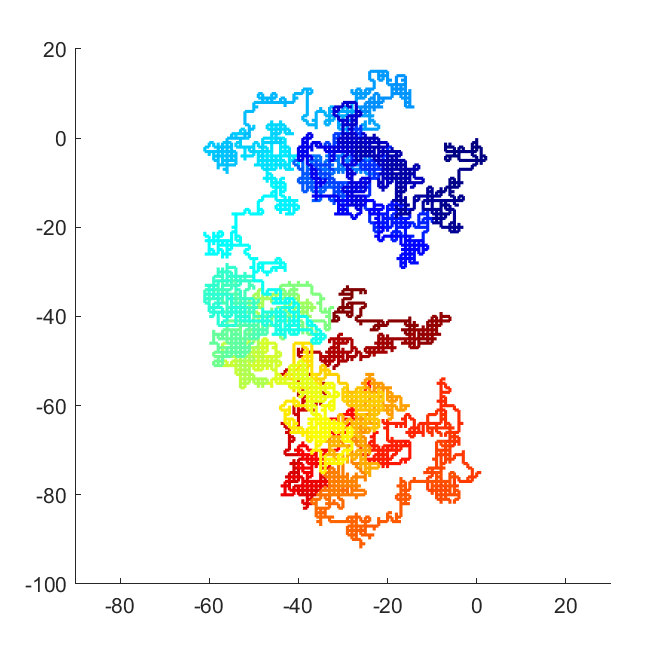} & \hspace{-0.25cm} 
\includegraphics[scale=0.38]{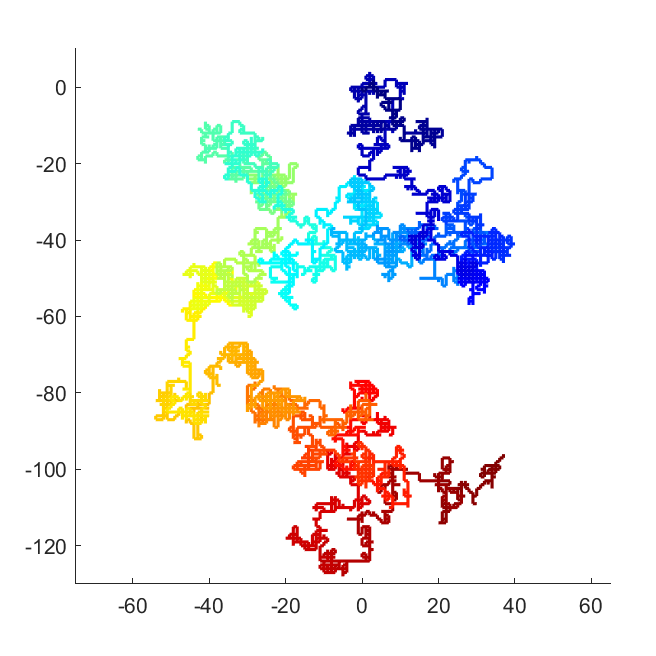}
\end{tabular}
\caption{The vertices  visited by  a 10,000-step  
%, ordered according to the amount of randomness, with the left picture being the least random.
%The processes are
   rotor walk (left),  `H'-`V' walk with $q=1$ (middle), and  simple random walk (right) on $\mathbb{Z}^2$;
   these processes are ordered in increasing amount of randomness.
% The rotor walk has the least amount of randomness while the simple random walk has the full amount of randomness.
Each edge is colored according to the time of its first visit by the walker. }
    \label{figure: simple random walk vs rotor walk}%
\end{figure}
}

%\emph{Excited random walk}~\cite{BW03, KZ13} is  a variant of random walk with local memory in which the next label of each vertex does not depend on the current label, and instead is predetermined at the beginning of the walk.
%%We refer to \cite{KZ13} for a survey of this topic.

%
%Random walks with local memory is a special case  of \emph{stochastic abelian network}~\cite{BL16a}, 
%in which walkers might be added or removed at every step depending on the current environment of the walk.

\subsubsection{}  One dimensional random walk with local memories  are more commonly studied 
in the literature under the name \emph{excited random walks} (introduced by Benjamini and Wilson~\cite{BW03}): A pile of cookies is initially placed at each vertex of $\Z^d$ ($d\geq 1$).
Upon visiting  a vertex,
the walker consumes the topmost cookie from the pile  and moves to the neighboring vertex according to  probabilities prescribed by that cookie.
If there are no cookies left at the current vertex,
the walker chooses a neighbor uniformly at random and moves there.
%A cookie is \emph{positive} (resp. \emph{negative}) if its consumption 
%makes the walker moves 
%right with probability larger (resp. smaller) than $\frac{1}{2}$.

The functional limit theorem for excited random walks on $\Z$ have been studied for the case of bounded number of i.i.d. cookies~\cite{KM11,DK12}, periodic cookies \cite{KP16}, and Markovian cookies~\cite{KP17,HLSH18}, among others. 
The functional limit theorem for higher-dimensional walks are much rarer in comparison.
Nevertheless, 
it has been studied for the case of a single cookie with  drift to a specific direction by  \cite{VdHH12} (for dimensions $d>8$ and a specific drift intensity), by  \cite{BR07} (for all dimensions), and by \cite{MPRV12} (for all dimensions under more general assumptions).
We refer the reader to \cite{KZ13} for an excellent survey on excited random walks.
Finally, in the direction of non-Markovian walks,
the most relevant recent work is \cite{BL19}, which applies  martingale theory to higher-dimensional elephant random walks.

The main motivation of this paper is to begin extending the results of \cite{KP17,HLSH18} from dimension one to higher dimensions, which we partly achieve in Theorem~\ref{theorem: intro scaling limit}. 
In particular, it is shown in \cite{HLSH18} that the scaling limit for $p$-rotor walk in $\Z$ 
(where the next edge points in the same direction as the current edge with probability $p$, and points in the opposite direction with probability $1-p$)
is a Brownian motion perturbed at extrema.
This perturbation is caused by the  initial environment in \cite{HLSH18} not being a native environment. 
We expect that proving a scaling limit 
for any higher-dimensional random walk with local memory in a non-native environment
 will require major new ideas (for example, what are the planar and higher-dimensional analogues of the one-dimensional Brownian motion perturbed at extrema?).

\subsubsection{} 
A \emph{self-interacting random walk}
(SIRW) is a nearest-neighbour walk on 
$\Z^d$,
where at each step the probability of the walker to jump along a certain direction $\alpha$ is proportional to $w(n_{\alpha})$,
where $w:\N  \to \R_{>0}$ is a monotone weight function and $n_{\alpha}$ is the number of previous jumps along the direction $\alpha$. 
Unlike random walks with local memory, the transition probabilities for SIRW  depend on all of the previous visits to the current location
rather than just the most recent visit.
Various  limit theorems for various one-dimensional SIRWs were studied by T\'{o}th~(see e.g., \cite{Tot95,Tot96}), and we refer to the survey~\cite{Tot99} for references on this subject.
It remains to be seen if the methods of this paper can be applied to SIRWs in higher dimensions.

\subsubsection{}  The idea of viewing the environment from the perspective of the walker dates back to the work of  Kozlov~\cite{KOZ85} and Papanicolaou-Varadhan~\cite{PV82}.
%It was pointed to us by the anonymous referee that it would be interesting to find out what are the native environments for SIRWs.
We refer the reader to~\cite[Lecture~1]{BS02} for references on this subject.

\subsubsection{}   Random walk with local memory is a special case of the \emph{stochastic abelian networks} defined in \cite{BL16}. 
More precisely, a random walk with local memory is a unary network in which every processor sends exactly one letter of output for each letter of input.  
From this perspective, a general stochastic abelian network can then be viewed as a branching random 
walk with local memory with multiple types of walkers.

\subsection{Outline}

In \S\ref{rotor walks} we give the rigorous definition of random walks with local memory.
In \S\ref{scaling limit} we prove Proposition~\ref{prop: scaling limit}.
In \S\ref{ust} we construct the wired spanning forest  oriented toward a fixed vertex, which is a simple modification of the construction in \cite{BLPS01}. 
%This section applies to all electrical networks.
In \S\ref{stationarity} we use the oriented wired spanning forest from \S\ref{ust} to construct a native environment for random walk with local memory, and proves Theorem~\ref{theorem: intro stationarity}. 
%This section applies to all weighted Cayley graphs.
In \S\ref{section: ergodic} we prove Theorem~\ref{theorem: intro ergodic}.
In \S\ref{SLLN} we prove Theorem~\ref{theorem: intro scaling limit}.
% restrict to lattice subgraphs of $\R^d$, and prove an almost sure invariance principle (weak convergence to Brownian motion) for certain random walks with local memory. 
In \S\ref{section: further questions} we conclude with a list of open problems.
In Appendix~\ref{section: hidden random walks} we show the reduction that converts random  walks with more complicated forms of local memory to  the standard retrospective form, at the cost of  
changing the underlying graph to a larger graph that might  have multiple edges.

\bigskip

\section{Random walks with local memory}\label{rotor walks}

Throughout this paper $G:=(V(G),E(G))$ denotes a connected, undirected graph that is locally finite (every vertex has finite degree) and simple (no loops, no multiple edges).
We remark that 
all the results in this paper can be extended  to non-simple graphs verbatim;
and we simply restrict to the case of simple graphs to
simplify the  notation.
% with the exception of \S\ref{section: hidden random walks}  where a graph may have multiple edges. 
When the graph $G$ is evident from context, we will omit $G$ from the notation and  write $V$ and $E$ instead.

A {\em neighbor} of a vertex $x$ is a vertex $y$ such that $\{x,y\} \in E$. We denote by $\Nbh(x)$ the set of all neighbors of $x$. 
An \emph{oriented edge} of $G$ is a pair $(x,y) \in V \times V$ such that $\{x,y\}$ is an (unoriented) edge of $G$.
We call  $(x,y)$ an {\em outgoing} edge of $x$ and an {\em incoming} edge of $y$. 
In an oriented subgraph of $G$,
the \emph{outdegree} (respectively, \emph{indegree}) of $x$ is the number of outgoing (respectively, \emph{incoming}) edges of $x$ in the oriented subgraph.
We denote by $\El$ the set of oriented edges of $G$.
The running example for a graph in this paper is the integer lattice $\Z^d$ of dimension $d$, i.e., the graph given by
\begin{align*}
V&:=\{ \xb  \mid \xb \in \Z^d \}; \qquad {E}:=\{ \{\xb, \yb\} \in \Z^d \times \Z^d \mid ||\xb-\yb||=1  \},
\end{align*}
where $|| \cdot ||$ denotes the Euclidean norm.

\smallskip

\begin{definition}[Mechanism]\label{definition: mechanism}
A \emph{mechanism} of a random walk with local memory
is a collection of independent Markov chains $\{M_x\}_{x \in V}$
indexed by the vertices of $G$,
such that  the state space of $M_x$ is $N(x)$, the set of neighbors of $x$.
We denote   by $p_x(\cdot,\cdot)$ the \emph{probability transition function}  of the chain $M_x$.
\end{definition}

\smallskip

A {\em rotor configuration} of  $G$ is a map $\rho:V \to V $ such that $\rho(x)$ is a neighbor of $x$ for all $x \in V$.
%\end{definition}
This should be thought of as assigning to each vertex $x$ of $G$ a \emph{rotor} which points to a neighbor of $x$ via an oriented edge of $G$. 
%We denote by $\Conf(G)$ the set of rotor configurations of $G$.
A \emph{walker-and-rotor configuration} is a pair $(x,\rho)$, where $x$ is a vertex of $G$ and $\rho$ is a rotor configuration of $G$.

\smallskip

\begin{remark}\label{remark: rotor configurations dual}
A 
rotor configuration can be interpreted as either:
\begin{itemize}
\item A function $\rho: V \to V$ such that $\rho(x) \in \Nbh(x)$ for all $x\in V$; or
\item An oriented  subgraph of $G$ that has exactly one outgoing edge of each vertex of $G$.
\end{itemize}
These two objects are identified with each other by the map 
$\rho \mapsto (V(\rho), E(\rho))$, where 
\begin{align*}
  V(\rho):=V, \qquad  E(\rho):=\{(x,\rho(x)) \mid x \in V\}.
\end{align*}
We would like to warn the reader that both interpretations will be used  interchangeably starting from \S\ref{stationarity}. 
\end{remark}

\smallskip

\begin{definition}[Random walk with local memory]\label{definition: random walk with local memory}
A \emph{random walk with local memory}, or RWLM for short, is a sequence  
$(X_n,\rho_n)_{n \geq 0}$  of  walker-and-rotor configurations satisfying the following transition rules:
\begin{equation}\label{equation: transition rule RWLM}
\begin{split}
\rho_{n+1}(x):=&\begin{cases}  Y_{n} & \text{if } x=X_n;\\
\rho_{n}(x) & \text{if } x\neq X_n. \end{cases}; \text{ and }\\
X_{n+1}:=&Y_{n},
\end{split}
\end{equation}
where $Y_{n}$ is a random neighbor of $X_n$ sampled from $p_{X_n}(\rho_n(X_n),\cdot)$ 
independent of the past.
\end{definition}

\smallskip

Described in words,  $X_n$ records the location of the walker and $\rho_n$ records the rotor configuration at time  $n$ of the RWLM.
At time $n$, 
the walker updates 
  the rotor of $X_n$
using the Markov chain  $M_{X_n}$ (which depends only on $X_n$ and $\rho_n(X_n)$),
and then moves to the vertex to which the new rotor is pointing. 
The \emph{local memory} in the name refers to the fact that the walker records  the last exit from each vertex that it visits via the rotor configuration.
See Figure~\ref{figure: p-rotor walk}
for an illustration of an RWLM  on $\Z^2$. 

\begin{figure}[ht]
\begin{tabular}{c @{\hskip 30 pt} c @{\hskip 30 pt} c @{\hskip 30 pt} c}
\includegraphics[scale=0.7]{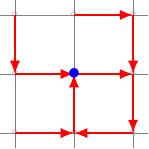} & 
\includegraphics[scale=0.7]{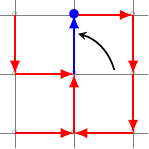} &
\includegraphics[scale=0.7]{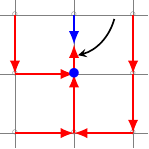} &
\includegraphics[scale=0.7]{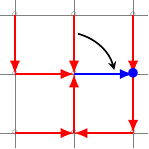} \\
$(X_0,\rho_0)$ & $(X_1,\rho_1)$ & $(X_2,\rho_2)$ & $(X_3,\rho_3)$
\end{tabular}
\caption{Three steps of a  random walk with local memory on $\Z^2$.
The location of the walker is given by {\color{blue} $\bullet$}, and  the rotor of each vertex is given by the arrow pointing out from the vertex.}
\label{figure: p-rotor walk}
\end{figure}

Naturally, the dynamics of the RWLM depend on the choice of 
the mechanism.
The following are three examples of RWLMs that have appeared in the literature:
\begin{enumerate}
\item Aldous-Broder walk, in which
the walker performs a simple random walk on $G$ and the rotor configuration never influences the decision of the walker.
That is to say, 
 for every $x \in V$ and $y \in \Nbh(x)$ the measure  $p_x(y, \cdot)$ is the  uniform distribution on  the neighbors of $x$.
 Our name for this walk comes from the algorithm of Aldous \cite{Ald90} and Broder \cite{Bro89} that generates the uniform spanning tree as a tree of first entrances of this walk.
%It is shown by \cite{Bro89,Ald90} that, for finite graphs, the recurrent states of this network are  
%walker-and-rotor configurations $(x,\rho)$ for which $\rho$ is a \emph{unicycle} rooted at $x$ (i.e., $\rho$ has one unique oriented cycle,
%and   $x$ is contained in this cycle).
%Furthermore,
% the uniform distribution on these walker-and-rotor configurations is a stationary distribution of this RWLM.

\item Rotor walk \cite{WLB96,PDDK96,Pro03},
in which the Markov chain $M_x$ is given by  
 a deterministic permutation $\tau_x$ of the neighbors of $x$.
 That is, the  chain $M_x$ in state $y$ will transition to $\tau_x(y)$ with probability 1.
 We refer to \cite{HLM08,FLP16} for more details.

\item $p$-rotor walk on $\Z$~\cite{HLSH18} for $p\in[0,1]$, in which the probability transition function $p_x$ $(x \in \Z)$ is 
given by 
\[p_x(x\pm  1, x\mp1)=1-p; \qquad  p_x(x\pm  1, x\pm 1)=p. \]
\end{enumerate}

\smallskip

We now present three other examples of RWLMs.

\smallskip

\begin{example}[$p$-rotor walk on $\Z^d$]\label{ex: p-rotor walk}
Fix $d\geq 2$ and $p\in[0,1]$.
Denote by $\eb_1,\ldots, \eb_d$ the canonical basis of $\R^d$.
The Markov chain $M_{\xb}$ ($\xb \in \Z^d$) has state space $\{ \xb \pm \eb_i \mid 1\leq i \leq d  \}$ and  has the following  transition rule:
\[ \xb\pm \eb_i \quad \text{ transitions to } \quad 
\begin{cases} \xb\pm \eb_j & \text{with probability $\frac{p}{d-1}$ if $i<j$}; \\
\xb\mp \eb_j & \text{with probability $\frac{1-p}{d-1}$ if $i<j$};\\
\xb\pm \eb_j & \text{with probability $\frac{1-p}{d-1}$ if $i>j$};\\
\xb\mp \eb_j & \text{with probability $\frac{p}{d-1}$ if $i>j$}.   
\end{cases}    \]
Described in words, if the rotor at the particle's current location is parallel to $\eb_i$, 
the walker first picks $j$ uniformly from $\{1,\ldots, d\} \setminus \{i\}$.
Then, the walker rotates the current rotor counterclockwise 
in the $\{\min(i,j),\max(i,j)\}$-plane with probability $p$, and rotates clockwise 
with probability $1-p$.
See  Figure~\ref{figure: rotation diagram p-rotor walk} for an illustration of this mechanism on $\Z^2$.
\begin{figure}[t!]
\begin{tabular}{c @{\hskip 40 pt} c @{\hskip 40 pt} c @{\hskip 40 pt} c}
\includegraphics[scale=0.6]{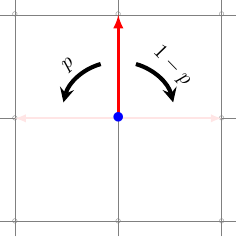} &\includegraphics[scale=0.6]{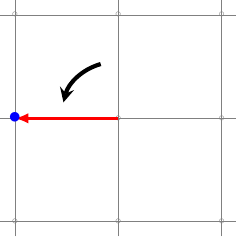} & \includegraphics[scale=0.6]{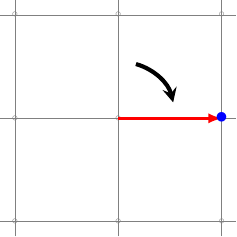}\\
(a) & (b) & (c)
\end{tabular}
\caption{(a) The mechanism for $p$-rotor walk on $\Z^2$, in which the rotor rotates counterclockwise with probability $p$, and clockwise with probability $1-p$.
  The location  of the walker and the rotor after one step of the RWLM
is given by (b)   
   if the walker chooses to rotate the rotor counterclockwise, and by (c) if the walker  chooses to rotate the rotor clockwise.}
\label{figure: rotation diagram p-rotor walk}
\end{figure}
\end{example}

\smallskip

\begin{example}[$p,\!r$-rotor walk on $\Z^d$]\label{example: p,r-rotor walk}
Fix $d\geq 2$, $p\in [0,1]$, and  $r\in [0,1]$.
For each visit to $\xb \in \Z^d$,
the mechanism at $\xb$ 
transitions according to the mechanism of Aldous-Broder walk with probability $1-r$,
and transitions according to the mechanism of $p$-rotor walk with probability $r$, independent of the past visits.
Note that we recover `H,V'-walk on $\Z^2$ for $q\leq \frac{1}{2}$ in \S\ref{section: intro}
by taking  $p=\frac{1}{2}$ and $r=1-2q$.
Also note that,  unlike  $p$-rotor walks,
in this model
every neighbor of the current location of the walker (all $2d$ of them) is visited  next  with positive probability provided that $r<1$ (i.e., the walk is {elliptic}).
%Such a walk is called \emph{elliptic} in the literature of random walks in random environments~\cite{Zei04},
%This property will be explored more in \S\ref{SLLN}.
See  Figure~\ref{figure: rotation diagram p,r-rotor walk} for an illustration of this mechanism.
\begin{figure}[t!]
\centering 
\includegraphics[scale=0.65]{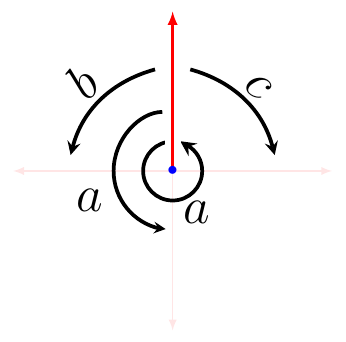}
\caption{The mechanism for  $p,\!r$-rotor walk on $\Z^2$, which stays at the current rotor with probability $a:=\frac{1-r}{4}$, rotates 180 degrees with probability $a$,
rotates 90 degrees counterclockwise with probability $b:=\frac{1-r}{4}+pr$, and rotates 90 degrees clockwise with probability $c:=\frac{1-r}{4}+(1-p)r$.}
\label{figure: rotation diagram p,r-rotor walk}
\end{figure}
\end{example}

\smallskip

\begin{example}[Triangular walk]\label{ex: triangular lattice}
The triangular lattice is the graph embedded in $\R^2$ given by:
\begin{align*}
V&:=\left \{ a \begin{pmatrix}
1\\0
\end{pmatrix} +  b\begin{pmatrix}
1/2 \\ \sqrt{3}/2 
\end{pmatrix}  \bigg | \,  a,b \in \Z \right \};\\
{E}&:=\{\{\xb,\yb\} \in V \times V \mid \|\xb-\yb\|=1\}.
\end{align*}
In this RWLM, the walker updates the current rotor  by applying a counterclockwise rotation by either 60 degrees, 180 degrees, or 300 degrees, 
each with probability $\frac{1}{3}$.
See Figure~\ref{figure: rotation diagram triangular walk} for an illustration of this mechanism.
\begin{figure}[t!]
\centering
\begin{tabular}{c @{\hskip 60 pt} c }
\includegraphics[scale=0.7]{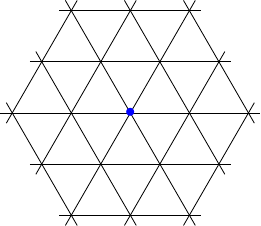} &\includegraphics[scale=0.7]{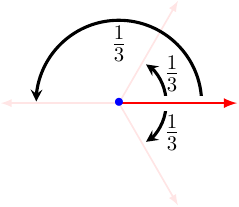} \\
(a) & (b) 
\end{tabular}
\caption{(a) The triangular lattice.
 (b) The mechanism for the triangular lattice, which rotates either 60 degrees counterclockwise, 180 degrees counterclockwise, or 300 degrees counterclockwise, 
each with probability $\frac{1}{3}$.}
 \label{figure: rotation diagram triangular walk}
\end{figure}
\end{example}

\bigskip

\section{Martingale central limit theorem} \label{scaling limit}

In this section we show that, under strong  regularity assumptions on the  RWLM,  
we can directly prove functional CLT from 
 the vector-valued martingale CLT proved in~\cite{RAS05}.
We denote by $D_{\R^d}[0,\infty)$ the Skorohod space  of $\R^d$-valued c\`{a}dl\`{a}g paths on $[0,\infty)$.
Recall that $|| \cdot ||$ denotes the Euclidean metric. 

\smallskip

\begin{theorem}[Martingale CLT~{\cite[Theorem~3]{RAS05}}]\label{CLT}
Let $(X_n)_{n \geq 0}$ be an $\R^d$-valued square-integrable martingale process w.r.t.\ a filtration  $(\Fscr_n)_{n \geq 0}$, 
and let $V_{n}:=X_{n+1}-X_{n}$ be the corresponding martingale difference sequence.
Suppose that:
\begin{enumerate}
\item There exists a symmetric, nonnegative definite $d \times d$ matrix $\Gamma$ such that
\[ \frac{1}{n}\sum_{i=0}^{n-1} \Cond{V_i V_i^\top}{\Fscr_i} \to
 \Gamma \qquad \text{in probability as } n \to \infty; \tag{CLT1} \label{CLT1}\] 
\item  For any $\epsilon>0$,
\[  \frac{1}{n}\sum_{i=0}^{n-1} \Cond{\|V_i\|^2 \, \satubb\{\|V_i\| \geq \epsilon \sqrt{n}\}}{\Fscr_i}  \to 0 \qquad \text{in probability as }n \to \infty.   \tag{CLT2} \label{CLT2}\]
\end{enumerate}
Then  
$ \left\{ \frac{1}{\sqrt{n}}{X_{\lfloor nt\rfloor}} , t\geq 0\right \} $
converges weakly on  $D_{\R^d}[0,\infty)$ to a Brownian motion with diffusion matrix $\Gamma$.
\qed
\end{theorem}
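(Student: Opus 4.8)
The plan is to follow the standard two-part strategy for functional central limit theorems: first establish tightness of the rescaled processes in the Skorohod space $D_{\R^d}[0,\infty)$, and then identify every subsequential weak limit as Brownian motion with diffusion matrix $\Gamma$. Throughout write $Y_n(t) := \frac{1}{\sqrt n}X_{\lfloor nt\rfloor}$ for the rescaled martingale, so that its jumps are $\frac{1}{\sqrt n}V_i$ and the filtration is $(\Fscr_n)$. The two hypotheses play complementary roles: \eqref{CLT1} pins down the limiting quadratic variation, while \eqref{CLT2} is the Lindeberg-type condition forcing continuity of the limit.

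For the identification step I would use the martingale-problem characterization of Brownian motion. Fix $f\in C_c^\infty(\R^d)$ and expand along the increments by a second-order Taylor expansion: $f(Y_n(\tfrac{i+1}{n}))-f(Y_n(\tfrac in))=\frac{1}{\sqrt n}\nabla f(Y_n(\tfrac in))\cdot V_i+\frac{1}{2n}V_i^\top(\nabla^2 f)(Y_n(\tfrac in))V_i+R_i$, where $R_i$ is the Taylor remainder. Taking $\Cond{\cdot}{\Fscr_i}$ annihilates the first-order term, since $(X_n)$ is a martingale gives $\Cond{V_i}{\Fscr_i}=0$. Summing the conditional second-order terms over $i<\lfloor nt\rfloor$ and invoking \eqref{CLT1} (in a localized form, legitimate because $\nabla^2 f$ is bounded and slowly varying along the path), the resulting sum is close to $\int_0^t \tfrac12\operatorname{tr}\!\big((\nabla^2 f)(Y_n(s))\,\Gamma\big)\,ds$. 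Hence $f(Y_n(t))-f(Y_n(0))-\int_0^t\tfrac12\sum_{j,k}\Gamma_{jk}\,\partial_j\partial_k f(Y_n(s))\,ds$ is, up to an error tending to zero in probability, a martingale. Any weak limit $Y$ therefore solves the martingale problem for the generator $L=\tfrac12\sum_{j,k}\Gamma_{jk}\partial_j\partial_k$; since this forces both $Y(t)$ and $Y(t)Y(t)^\top-\Gamma t$ to be martingales with continuous paths, Lévy's characterization identifies $Y$ as Brownian motion with covariance $\Gamma$.

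For tightness I would verify Aldous's criterion (equivalently, the Ethier–Kurtz tightness condition) for $(Y_n)$. The control of the predictable quadratic variation needed to bound the expected oscillation of $\|Y_n\|^2$ over short time windows is furnished directly by \eqref{CLT1}, and \eqref{CLT2} guarantees that the rescaled jumps are asymptotically negligible, so every subsequential limit has continuous sample paths and the required modulus-of-continuity estimate holds. Together these two ingredients yield relative compactness in $D_{\R^d}[0,\infty)$, and combined with the unique identification of the limit from the previous paragraph they give the asserted weak convergence.

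I expect the main obstacle to be the uniform control of the Taylor remainder $R_i$, which is precisely where \eqref{CLT2} is indispensable: one splits each increment into a small part, whose contribution to $\sum_i R_i$ is quadratically small and absorbed by \eqref{CLT1}, and a rare large part, whose total contribution vanishes by the truncated expectation in \eqref{CLT2}. Carrying out this truncation simultaneously for all $t$ in a compact interval, and for the vector-valued (rather than scalar) process, is the technical heart of the argument; reducing to one dimension via the Cramér–Wold device streamlines the covariance bookkeeping but does not by itself deliver functional convergence in the Skorohod topology, so the tightness estimate genuinely has to be established and cannot be bypassed.
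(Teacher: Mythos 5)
The first thing to say is that the paper contains no proof of Theorem~\ref{CLT} to compare you against: it is an external result, quoted verbatim from Rassoul-Agha and Sepp\"al\"ainen \cite[Theorem~3]{RAS05} and stated with an immediate \(\qedsymbol\). The authors use it as a black box (in Proposition~\ref{prop: scaling limit} and Theorem~\ref{scaling limit for elliptic rotor walk}); their contribution is verifying its hypotheses, not proving it. So your attempt can only be judged against the standard literature proof, and there your outline is essentially the canonical one: Aldous tightness plus identification of subsequential limits via the martingale problem and L\'evy's characterization is exactly the Ethier--Kurtz/Helland route that underlies the cited source.

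Two points you gesture at would need to be made honest before the sketch closes. First, \eqref{CLT1} as stated controls only the full sum up to $n$, whereas both your generator computation and your tightness estimate use a functional version, namely $\frac{1}{n}\sum_{i=0}^{\lfloor nt\rfloor-1} \Cond{V_i V_i^\top}{\Fscr_i} \to t\,\Gamma$ uniformly for $t$ in compacts. Pointwise in $t$ this follows by applying \eqref{CLT1} along $m=\lfloor nt\rfloor$ and multiplying by $\lfloor nt\rfloor/n \to t$; uniformity then follows because each summand $\Cond{V_i V_i^\top}{\Fscr_i}$ is positive semidefinite, so the partial sums are monotone in the p.s.d.\ order and converge to a continuous limit (a Dini/P\'olya argument). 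Your ``localized form'' of \eqref{CLT1} is precisely this statement and deserves a proof, not a parenthesis. Second, Aldous's criterion requires second-moment bounds on increments over stopping-time windows, but \eqref{CLT1} is only convergence in probability and no uniform integrability is assumed, so you cannot take expectations of it directly. The standard repair is the truncation you mention, carried out concretely: replace $V_i$ by $V_i\,\satubb\{\|V_i\|<\epsilon\sqrt{n}\}$ recentred by its conditional mean, bound the discarded part using \eqref{CLT2}, and localize the truncated martingale by stopping when its predictable quadratic variation first exceeds a fixed level; optional stopping then yields the moment bounds Aldous's criterion needs, and \eqref{CLT2} separately kills the maximal jump, forcing continuity of any limit. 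With these two steps executed your argument is a complete and correct proof of the theorem --- one that reproduces the standard argument in the literature rather than anything internal to this paper.
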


\smallskip

%As an application of Theorem~\ref{CLT}, we prove a scaling limit result for   a family of RWLMs that satisfy some regularity assumptions. 

We now apply Theorem~\ref{CLT} to RWLMs under the following assumptions.
Let $G=(V,E)$ be a simple connected graph such that $V$ is a subset of $\R^d$.
An RWLM is \emph{bounded} if 
\begin{equation}\label{equation: BDD}
 \sup_{ \{\xb,\yb\}\in E} ||\xb-\yb||<\infty; \tag{Bdd} 
\end{equation}
All the RWLMs described in \S\ref{rotor walks} are bounded.

Recall the definition of probability transition functions $p_{\xb}$ from Definition~\ref{definition: mechanism}.
Let $\xb$ be a vertex of $G$, and let $\yb$ be a neighbor of $\xb$.
We denote by $Y_{\xb,\yb}$ the random variable sampled from 
$p_{\xb}(\yb,\cdot)$.
The \emph{local covariance matrix} of $\xb,\yb$ is the $d \times d$ matrix  $\Gamma_{\xb,\yb} := \E{(Y-\xb)(Y-\xb)^\top}$.

We say that an RWLM is a \emph{martingale} if
\begin{equation}\label{equation: martingale}
	\E{Y_{\xb,\yb}} \ = \ \xb   \qquad \text{for every $\xb \in V$ and $\yb \in N(\xb)$}. \tag{Mtgl}  
\end{equation}
Note that this condition is equivalent to  requiring  the sequence  $(X_n)_{n\geq 0}$ of locations of walker of the RWLM  to be  a martingale.
The  Aldous-Broder walk on $\Z^d$ and the triangular walk (Example~\ref{ex: triangular lattice}) is a martingale, the deterministic rotor walk is \emph{not} a martingale,  
and the $p$-rotor walk (Example~\ref{ex: p-rotor walk}) and 
$p,\!r$-rotor walk (Example~\ref{example: p,r-rotor walk})
are martingales only if $p=\frac{1}{2}$.

We say that an RWLM has \emph{identical local covariances} if
\begin{equation}\label{equation: ILC}
	\Gamma_{\xb,\yb} \ = \ \Gamma_{\xb',\yb'}  \qquad \text{for every $x,x' \in V$ and $\yb \in N(\xb), \yb' \in N(\xb')$,} \tag{ILC}
\end{equation}
and in this case we write $\Gamma:= \Gamma_{\xb,\yb}$.
Aldous-Brouder walk on $\Z^d$ and triangular walk are the only RWLMs from \S\ref{rotor walks} for which \eqref{equation: ILC} holds.
The matrix $\Gamma$ is equal to $\frac{1}{d}I_d$ (where $I_d$ is the $d \times d$ identity matrix) in the former case, and is equal to 
$\begin{bmatrix}
	\frac{1}{2} & 0 \\
	0 & \frac{1}{6}
\end{bmatrix}$
in the latter case.
The  
$p$-rotor walk 
does \emph{not} 
satisfy \eqref{equation: ILC} 
as the covariance matrix $\Gamma_{\xb,\yb}$ is equal to 
$\frac{1}{d-1}(I_d-\eb_i\eb_i^\top)$, where $\eb_i$ is the standard unit  vector parallel to the edge $(\xb,\yb)$. 
The  
$p,\!r$-rotor walk (with $r >0$) 
does not  satisfy \eqref{equation: ILC} either by an analogous calculation.

We now restate  Proposition~\ref{prop: scaling limit} from the introduction  in a slightly more general form.

\smallskip

\begin{repproposition}{prop: scaling limit}\label{propscaling limit}
Let $G$ be a simple, connected graph  with its vertex set being a subset of $\R^d$.
	Consider an RWLM  on $G$ that satisfies \eqref{equation: BDD}, \eqref{equation: martingale}, and \eqref{equation: ILC}.
Then the scaled walk $\left\{ \frac{1}{\sqrt{n}} X_{\lfloor nt\rfloor },t \geq 0\right\}$ converges weakly on  $D_{\R^d}[0,\infty)$ to a Brownian motion with diffusion matrix $\Gamma$.
\end{repproposition}

\smallskip

The remarkable part of Proposition~\hyperref[propscaling limit]{1.\thenumberprops} 
is that the conditions involve only  the mechanism of the RWLM,
and hence we can derive a scaling limit result \emph{regardless} of the initial walker-and-rotor configuration.
In particular, it follows from  Proposition~\hyperref[propscaling limit]{1.\thenumberprops} that, for every initial walker-and-rotor configuration,  the triangular walk from Example~\ref{ex: triangular lattice} 
satisfies a functional CLT. 

Naturally,
 Proposition~\hyperref[propscaling limit]{1.\thenumberprops} does not apply to $p$-rotor walk and 
$p,\!r$-rotor walk even when $p=\frac{1}{2}$,
as \eqref{equation: ILC} is never satisfied.
%Indeed, the 
%    $p$-rotor walk 
%     from Example~\ref{ex: p-rotor walk}
%does \emph{not} 
% satisfy \eqref{equation: ILC} 
% as the covariance matrix $\Gamma_{\xb,\yb}$ is equal to 
% $\frac{1}{d-1}(I_d-\eb_i\eb_i^\top)$ and $\eb_i$ is the standard unit  vector parallel to the edge $(\xb,\yb)$. 
%By an analogous calculation,
%the  
%$p,\!r$-rotor walk from Example~\ref{example: p,r-rotor walk} (with $r >0$) 
%does not  satisfy \eqref{equation: ILC} either.
Thus we need a different approach to prove a scaling limit for these RWLMs,
which we partially achieve at the cost of starting the walk 
with a specific  rotor configuration; see Theorem~\ref{theorem: intro scaling limit}. 
 
\smallskip

%We now present the proof of Proposition~\ref{prop: scaling limit}.
\begin{proof}[Proof of Proposition~{\hyperref[propscaling limit]{1.\thenumberprops}}]
It suffices to check  that all 
 conditions of Theorem~\ref{CLT} are satisfied. 
 Write $C:= \sup_{ \{\xb,\yb\}\in E} ||\xb-\yb||$.
 Note that $C$ is finite by \eqref{equation: BDD}.
This implies that  $||X_n||\leq Cn+||X_0||$ for all $n\geq 0$,
and it then follows that $(X_n)_{n \geq 0}$ is square-integrable.

 We now check that $(X_n)_{n \geq 0}$ is a martingale process with respect to the filtration $\Fscr_n:=\sigma(X_0,\dots,X_n,\rho_0,\dots,\rho_n)$.
%For any $\xb \in V$ and $\yb \in \Nbh(\xb)$, let $Y_{\xb,\yb}$ be a random neighbor of $\xb$ sampled from $p_{\xb}(\yb,\cdot)$ independently of $\Fscr_n$.
It then follows from the transition rule of RWLM (see \eqref{equation: transition rule RWLM}) that,  for any $n\geq 0$:
\begin{align*}
\Cond{X_{n+1}}{\Fscr_n}
=&\sum_{\xb\in V}\sum_{\yb \in \Nbh(\xb)} \Cond{Y_{\xb,\yb} \, \satubb\{X_n={\xb},\rho_{n}(\xb)=\yb\}}{\Fscr_n}   \qquad (\text{by Definition~\ref{definition: random walk with local memory}})\\
=&\sum_{\xb\in V} \sum_{\yb \in \Nbh{(\xb)}} \E{Y_{\xb,\yb}} \, \satubb\{X_n={\xb},\rho_{n}(\xb)=\yb\}\\
=&\sum_{\xb\in V} \sum_{\yb \in \Nbh(\xb)} \xb \, \satubb\{X_n={\xb},\rho_{n}(\xb)=\yb\} \qquad \text{(by \eqref{equation: martingale})} \\
=&X_n.
\end{align*}
This shows that  $(X_n)_{n \geq 0}$ is a martingale.

We now check the condition \eqref{CLT1}.
It follows from  the the transition rule of RWLM that, for any $n\geq 0$:
\begin{align*}
\Cond{V_n V_n^\top}{\Fscr_n}	&= \sum_{\xb \in V}\sum_{\yb \in \Nbh(\xb)} \Cond{ (Y_{\xb,\yb}- \xb) (Y_{\xb,\yb}- \xb)^\top \,  \satubb \{X_n = \xb, \rho_{n}(\xb)=\yb\}}{\Fscr_n}\\
&= \sum_{\xb \in V}\sum_{ \yb \in \Nbh(\xb)}  \E{ (Y_{\xb,\yb}- \xb) (Y_{\xb,\yb}- \xb)^\top} \,  \satubb\{X_n={\xb},\rho_{n}(\xb)=\yb\}\\
&= \sum_{\xb \in V}\sum_{\yb \in \Nbh(\xb)}  \Gamma \,  \satubb\{X_n={\xb},\rho_{n}(\xb)=\yb \} \qquad \text{(by~\eqref{equation: ILC})}\\
&= \Gamma.
\end{align*}
It then follows that
$\frac{1}{n}\sum_{i=0}^{n-1}\Cond{V_i V_i^\top}{\Fscr_i} = \Gamma$,
which proves \eqref{CLT1}.

We now check the condition \eqref{CLT2}.
Note that 
\[  || V_n||= ||X_{n+1}-X_n||\leq \sup_{ \{\xb,\yb\}\in E} ||\xb-\yb||<\infty,\]
where the last inequality is due to \eqref{equation: BDD}.
Hence for any $\epsilon>0$, for  sufficiently large $n$ we have that $\satubb\{\|V_i\|\geq \epsilon \sqrt{n}\}=0$ for every $i \geq 0$.
This implies that
	\[ \frac{1}{n}\sum_{i=0}^{n-1} \Cond{\|V_i\|^2 \satubb\{\|V_i\| \geq \epsilon \sqrt{n}\}}{\Fscr_i}= 0,\]
which proves \eqref{CLT2}.
The proof is now complete.	
\end{proof}

\bigskip

\section{Wired spanning forest oriented toward a root}\label{ust}

In this section  we present two methods to generate the wired spanning forest   oriented toward a chosen root vertex, which we will use to construct  an initial rotor configuration for random walks with local memory in \S\ref{stationarity} and \S\ref{SLLN}.
Most of the material in this section is not new.
Indeed, the
 material in  \S\ref{subsection: unoriented wired spanning forest} and \S\ref{subsection: wsf oriented toward a root}
 is taken from the relevant part of \cite{LP16},
and the  material in \S\ref{subsection: Wilson recurrent} and \S\ref{subsection: Wilson transient} is 
 a straightforward modification of Wilson's method~\cite{Wil96,BLPS01}, which we spell out for completeness.

 \subsection{Unoriented wired spanning forest}\label{subsection: unoriented wired spanning forest}
We begin by defining the unoriented wired spanning forest, and we
 refer to  \cite{BLPS01} and    
 \cite[Chapters 4 \& 10]{LP16} for a  detailed discussion on this topic.

Recall that  $G:=(V(G),E(G))$ is a simple, connected, 
 undirected graph   that  is locally finite. Let  $\Fscr:=\Fscr(G)$ be the $\sigma$-algebra on the set of  subgraphs of $G$  generated by sets of the form
$\{H \in 2^{E(G)}  \mid B \subseteq H\}$, where  $B$ is a finite subset of $E(G)$. The unoriented wired spanning forest will be a probability distribution on the measurable space $(2^{E(G)}, \Fscr(G))$.

%The wired spanning forest will depend on the graph and the conductance on the graph, which together constitute an electrical network.

An \emph{electrical network} is a pair $(G,c)$, where $G$ is a locally finite, simple, connected  graph, and 
the \emph{conductance} $c:E \to \R_{>0}$  is a function that sends each unoriented edge of $G$ to a positive real number. We denote by $c\{x,y\}$ the conductance of the unoriented edge $\{x,y\}$.
 (We emphasize that $G$ is always an \textbf{unoriented} graph, and $c\{x,y\} = c\{y,x\}$.)
 
We associate to each 
 $(G,c)$  the Markov chain with state space $V(G)$ and such that,
 for every adjacent vertices $x,y$,  
  the probability to transition from $x$ to $y$  is proportional to $c\{x,y\}$. 
%$\frac{c\{x,y\}}{\sum_{z \in \Nbh(x)} c\{x,z\}}$.
This Markov chain is called the \emph{network random walk} on $(G,c)$.
The network $(G,c)$ is  \emph{recurrent} if the network random walk eventually returns to its starting point with probability $1$, and is \emph{transient} otherwise. 

We start by defining the wired spanning forest for the network $(G,c)$ when $G$ is a finite graph, in which case the distribution is concentrated on the spanning trees of $G$.
The \emph{weight} of a finite 
subgraph $H$ of $G$ is
\[\Xi(H):=\prod_{\{x,y\} \in E(H)} c\{x,y\}. \]

%\begin{definition}\label{definition: weight of an undirected tree}
%Let $(G,c)$ be an electrical network such that $G$ is a finite graph. The \emph{weight} of a 
%\end{definition}

\smallskip

\begin{definition}[Unoriented spanning forest for finite graphs]\label{definition: unoriented wsf finite graph}
For a finite graph $G$,
the \emph{unoriented wired spanning forest} $\wsf:=\wsf(G,c)$ is the probability distribution on spanning trees of $G$ in which each tree $T$ is picked with probability proportional to $\Xi(T)$.
\end{definition}

\smallskip

 Note that the term ``wired'' is not usually present in Definition~\ref{definition: unoriented wsf finite graph}  when $G$ is finite, as wired exhaustion (see Definition~\ref{definition: wired exhaustion} below) is not a required concept here.
 In fact, in this case  the wired spanning forest will  always be a tree. 
However, using the terms ``wired'' and ``forest'' will significantly simplify the notation in this paper, as  our results  apply to both finite and infinite graphs.

\smallskip 
 
\begin{definition}[Wired exhaustion]\label{definition: wired exhaustion}
%Let $(G,c)$ be an electrical network.
Let $(W_n)_{n \geq 0}$ be a sequence of finite, connected subsets of $V(G)$ such that 
\begin{itemize}
 \item $\bigcup_{n \geq 0}W_n=V(G)$; and 
 \item $W_n \subseteq W_{n+1}$ for all $n \geq 0$.
\end{itemize}
 The {\em wired exhaustion} of $G$ is the sequence of electrical networks $(G_n, c_n)_{n \geq 0}$  defined as follows.
 The graph  $G_n$ is the undirected graph    obtained from $G$ by identifying all the vertices of $V(G)\setminus W_n$ to a single vertex $z_n$ and removing  loops and extra multiple edges that are formed.
 The conductance  $c_n: E(G_n) \to \R_{>0}$ is defined  by 
 \[ c_n\{x,y\}:= \begin{cases}
 c\{x,y\}& \text{ if } x,y \in W_n;\\
 \sum_{y' \notin W_n} c\{x,y'\} & \text{ if } x \in W_n \text{ and } y=z_n.  
\end{cases} \qedhere
   \]
\end{definition}

\smallskip

We denote by $\mu_n$ the probability distribution $\wsf(G_n,c_n)$ on the subgraphs of $G_n$.
We can now define  the wired spanning forest for infinite graphs using the concept of  wired exhaustion.  

\smallskip

\begin{definition}[Unoriented wired spanning forest for infinite graphs]\label{definition: unoriented wsf for all graphs}
%Let $(G,c)$ be an electrical network. 
The \emph{wired spanning forest} ${\wsf}:={\wsf}(G,c)$ is a probability distribution on subgraphs of $G$ such that, for any wired exhaustion and any finite $B \subseteq E(G)$,
\begin{equation}\label{equation: infinite-volume limit unoriented wsf}
	\wsf[B\subseteq F] = \lim_{n \to \infty} \mu_n[B \subseteq T_n],
\end{equation}
where $F$ is a random subgraph of $G$ distributed according to $\wsf$, and $T_n$ is a random spanning tree of $G_n$ distributed according to $\mu_n$.
\end{definition}

\smallskip

The quantity $\mu_n[B \subseteq T_n]$  decreases as $n \to \infty$~\cite[Chapter~10]{LP16}, and hence 
the limit in \eqref{equation: infinite-volume limit unoriented wsf} exists  and does not depend on the choice of the wired exhaustion.
By the Kolmogorov extension theorem, there exists a unique probability distribution on $(2^{E(G)}, \Fscr(G))$ that satisfies \eqref{equation: infinite-volume limit unoriented wsf}.   

The random subgraph sampled from ${\wsf}$ is always a spanning forest but not necessarily a  spanning tree.
It is  well-known that, for the graph $\Z^d$ with a constant conductance, this random subgraph has  one  connected component a.s.\ if $d\leq 4$, and  infinitely many  connected components a.s.\ if $d\geq 5$~\cite[Theorem~4.2]{Pem91}. For more on the geometry of the ${\wsf}$ and its dependence on dimension, see \cite{BKPS04,HP19}.

\medskip

\subsection{Wired spanning forest oriented toward a root}\label{subsection: wsf oriented toward a root}
 We now define the  wired spanning forest oriented toward a chosen root vertex. 
 Denote by 
 \[\arrow{E}(G) := \bigcup_{\{x,y\} \in E(G)} \{ (x,y), (y,x) \}\]
 the set of oriented edges of $G$. Let  $\Fscrl:=\Fscrl(G)$ be the $\sigma$-algebra on the set of oriented subgraphs of $G$  generated by sets of the form
$\{\Hl \in 2^{\El(G)}  \mid \Bl \subseteq \Hl\}$, where  $\Bl$ is a finite subset of $\El(G)$.
The oriented wired spanning forest will be a probability distribution on the measurable space $(2^{\El(G)}, \Fscrl(G))$.

We start by defining the oriented wired spanning forest when $G$ is a finite graph, in which case the distribution is concentrated on the oriented spanning trees of $G$.
Fix a root vertex $r \in V(G)$ for the rest of this section.

\smallskip

\begin{definition}[Oriented spanning tree]\label{oriented spanning tree}
An \emph{$r$-oriented spanning tree} $\Tl$ of $G$ is an oriented subgraph of $G$ such that,  for any $x \in V(G)$, there exists a unique  directed path in $\Tl$ that starts at $x$ and ends at $r$.
\end{definition}

\smallskip

Note that in an $r$-oriented spanning tree $\Tl$, every vertex in $V(G) \setminus\{r\}$ has outdegree $1$ in $\Tl$, and the root vertex $r$ has outdegree $0$ in $\Tl$.
%, as the path of length $0$ that starts and ends at $r$ is a directed path in $\Tl$.
Also note that given an unoriented spanning tree of a finite graph and a root vertex $r$, there is a unique way to orient the tree to become an $r$-oriented spanning tree.
The \emph{weight} of a finite oriented subgraph $\arrow{H}$ of $G$  is
\[ \Xi(\arrow{H}):=\prod_{(x,y) \in \El(\arrow{H})} c\{x,y\}.  \qedhere \]

\smallskip

\begin{definition}[Rooted oriented wired spanning forest for finite graphs]
\label{definition: wsf finite graphs}
Let $G$ be a finite graph.
The $r$-\emph{oriented wired spanning forest}, denoted $\arrow{\wsf}_r:=\arrow{\wsf}_r(G,c)$, 
is the probability distribution on $r$-oriented spanning trees of $G$ in which each tree $\Tl$ is picked with probability proportional to $\Xi(\Tl)$.
\end{definition}

 \smallskip

We  now define  the $r$-oriented wired spanning forest for infinite graphs $G$. Let $(G_n,c_n)_{n \geq 0}$ be a wired exhaustion of $G$. 
We denote by $\arrow{\mu_{r,n}}$ the probability distribution 
$\arrow{\wsf}_r(G_n,c_n)$ on the oriented subgraphs of $G_n$.

\smallskip

\begin{definition}[Rooted oriented wired spanning forest for infinite graphs]\label{definition: wsf all graphs}
The \emph{$r$-oriented wired spanning forest}, denoted $\arrow{\wsf}_r:=\arrow{\wsf}_r(G,c)$, 
is a probability distribution on oriented subgraphs of $G$ 
such that, for any wired exhaustion
and any  
finite   $\Bl \subseteq \El(G)$,
\begin{equation}\label{equation: infinite-volume limit oriented wsf}
 \arrow{\wsf}_r[\Bl\subseteq \arrow{F}] =\lim_{n \to 0} \  \arrow{\mu_{r,n}}[\Bl \subseteq \arrow{T_n}],  
 \end{equation}
where $\arrow{F}$ is a random oriented subgraph of $G$ distributed according to $\arrow{\wsf}_r$ and $\arrow{T_n}$ is a random $r$-oriented tree of $G_n$ distributed according to $\arrow{\mu_{r,n}}$.
\end{definition}

\smallskip

The limit in Definition~\ref{definition: wsf all graphs} exists  and does not depend on the choice of the wired exhaustion
 as we will see in \S\ref{subsection: Wilson recurrent} (for recurrent networks) and  \S\ref{subsection: Wilson transient} (for transient networks).
By the Kolmogorov extension theorem, there exists a unique probability distribution on $(2^{\El(G)}, \Fscrl(G))$ that satisfies Definition~\ref{definition: wsf all graphs}.

The  underlying graph of the $r$-oriented wired spanning forest is the unoriented wired spanning forest, in the following sense.

\smallskip

\begin{lemma}\label{lemma: wsf and oriented wsf}
%Let $(G,c)$ be an electrical network, and let $r$ be a vertex of $G$.
Let $f: 2^{\arrow{E}(G)} \to 2^{E(G)}$ be the map that takes an oriented subgraph and  erases the orientation of every edge. 
If $\Fl$ is an oriented subgraph of $G$ sampled from $\arrow{\wsf}_r$, 
then $f(\Fl)$ is an unoriented subgraph of $G$ that has the law of $\wsf$.
\end{lemma}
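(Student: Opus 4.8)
The plan is to prove the statement at the level of finite graphs first and then pass to the limit using the defining infinite-volume limits of both $\wsf$ and $\arrow{\wsf}_r$. The key observation, already recorded in the excerpt just before Definition~\ref{definition: wsf finite graphs}, is that for a finite graph there is a canonical bijection between unoriented spanning trees and $r$-oriented spanning trees: given an unoriented spanning tree $T$, there is a \emph{unique} way to orient its edges so that every vertex has a directed path to $r$. Under the forgetful map $f$ this bijection is exactly the inverse of $f$ restricted to $r$-oriented spanning trees. Crucially, the weight $\Xi$ is defined identically in Definition~\ref{definition: weight of an undirected tree} and Definition~\ref{definition: weight of a directed tree} as a product of conductances $c\{x,y\}$, which does not see orientation; hence $\Xi(\Tl) = \Xi(f(\Tl))$ for any $r$-oriented spanning tree $\Tl$. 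This immediately shows that the pushforward of $\arrow{\wsf}_r(G,c)$ under $f$ equals $\wsf(G,c)$ on any finite graph, since both assign to a tree a probability proportional to the same weight, and the normalizing constants therefore coincide.

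For the infinite case I would fix a wired exhaustion $(G_n,c_n)_{n\geq 0}$ and apply the finite-graph result to each $G_n$: the pushforward of $\arrow{\mu_{r,n}}$ under the forgetful map $f_n$ equals $\mu_n$. The goal is to show that the law of $f(\Fl)$ agrees with $\wsf$ on the generating sets $\{H : B \subseteq H\}$ for finite $B \subseteq E(G)$. First I would relate the event $\{B \subseteq f(\Fl)\}$ to events about the oriented forest. The natural way is to write
\[
 \{B \subseteq f(\Fl)\} = \bigcup_{\text{orientations } \Bl \text{ of } B} \{\Bl \subseteq \Fl\},
\]
where the union runs over all ways of assigning an orientation to each unoriented edge of $B$, and to check this is in fact a \emph{disjoint} union modulo the fact that in any $r$-oriented forest each unoriented edge carries at most one orientation. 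Then by finite additivity
\[
 \arrow{\wsf}_r[B \subseteq f(\Fl)] = \sum_{\Bl} \arrow{\wsf}_r[\Bl \subseteq \Fl],
\]
and each summand is controlled by the infinite-volume limit in equation~\eqref{equation: infinite-volume limit oriented wsf}. I would then exchange the (finite) sum with the limit over $n$ and use the corresponding finite-graph identity $\arrow{\mu_{r,n}}[\Bl \subseteq \arrow{T_n}] = \mu_n[f_n(\Bl) \text{ compatible}]$, summing back over orientations to recover $\mu_n[B \subseteq T_n]$, whose limit is $\wsf[B \subseteq F]$ by equation~\eqref{equation: infinite-volume limit unoriented wsf}.

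The main obstacle I anticipate is bookkeeping the orientation sum correctly at the level of $G_n$, not conceptual difficulty. In $G_n$ the vertices outside $W_n$ are collapsed to $z_n$, so an edge $\{x,y\}$ of $B$ with both endpoints in $W_n$ has a well-defined image, but I must ensure $n$ is taken large enough that all endpoints of $B$ lie in $W_n$ and that no two distinct edges of $B$ collapse together; since $B$ is finite this holds for all sufficiently large $n$, and the limits are unaffected. The second point requiring care is that the decomposition over orientations of $B$ is genuinely a partition of the event $\{B \subseteq f(\Fl)\}$: because $\Fl$ is an $r$-oriented forest, for each edge $\{x,y\} \in B$ exactly one of $(x,y)$ or $(y,x)$ can appear in $\Fl$, so the orientations of $B$ that are realizable are mutually exclusive, justifying the finite additivity step. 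Once these two points are handled, matching the normalizing constants is automatic from the weight-preservation $\Xi(\Tl) = \Xi(f(\Tl))$ established in the finite case, and the result follows by taking $n \to \infty$ term by term in the finite sum.
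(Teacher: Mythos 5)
Your proposal is correct and takes essentially the same route as the paper's proof: reduce to finite graphs, where $f$ is a weight-preserving bijection between $r$-oriented spanning trees and unoriented spanning trees (so the two normalized laws coincide), then pass to the infinite case via the wired exhaustion, checking agreement on events of the form $\{B \subseteq f(\Fl)\}$ for finite $B$. The orientation-decomposition and disjointness bookkeeping you spell out is precisely the detail the paper compresses into its one-sentence reduction, so your write-up is a faithful (and more explicit) version of the same argument.
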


\smallskip

\begin{proof}
Note that, 
for any finite subset $B$ of $E(G)$,
the event $\{ B \subseteq f(\Fl)\}$ depends only on finitely many oriented edges.
Therefore, it suffices to consider the case when $G$ is a finite graph, as the case of infinite graphs follows by taking the limit over a wired exhaustion and then verifying the lemma 
for all events of the form $\{ B \subseteq f(\Fl)\}$ for some finite $B$.

When $G$ is finite, note that 
 $f$ is a bijection between 
$r$-oriented spanning trees of $G$ and  unoriented spanning trees of $G$
that preserves the weight of spanning trees.
The lemma now 
  follows from  Definition~\ref{definition: unoriented wsf finite graph} and Definition~\ref{definition: wsf finite graphs}, and the proof is complete.
\end{proof}

\smallskip

As in the unoriented case, a random oriented  subgraph $\arrow{F}$ sampled from $\arrow{\wsf}_r$ is not necessarily an oriented  spanning tree.
However, it is always an \emph{$r$-oriented spanning forest} of $G$:
the underlying graph of $\arrow{F}$ is a spanning forest,
%and the graph spans all of the vertices of $G$.
 every vertex in $V(G)\setminus \{r\}$ has outdegree 1 in $\arrow{F}$, and $r$ has outdegree $0$ in $\arrow{F}$.
The first condition follows from Lemma~\ref{lemma: wsf and oriented wsf}, and the others can be verified directly from the limit in Definition~\ref{definition: wsf all graphs} as these events only depend on  finitely many edges.

\medskip

\subsection{Wilson's method oriented toward a root: recurrent case}\label{subsection: Wilson recurrent}

In this subsection we describe an algorithm due to Wilson~\cite{Wil96} that  generates $\wsf(G,c)$ and $\arrow{\wsf}_r(G,c)$ for recurrent networks without using the weak limit in Definition~\ref{definition: wsf all graphs}.

A \emph{(finite) directed walk}  in $G$ is a sequence $\langle x_0,  \ldots, x_n \rangle$ such that $\{x_i,x_{i+1}\} \in E(G)$ for $i \in \{0,\ldots, n-1\}$.
The  \emph{loop erasure} of a directed walk $\langle x_0,  \ldots, x_n \rangle$, denoted by $\LE\langle x_0, \ldots, x_n \rangle$,  is  obtained by erasing cycles in the directed walk  in the order they appear,
i.e., it is the directed walk given by the following recursive definition.
Let $y_0:=x_0$.
Suppose that $y_i$ has been defined, and let $j$ be the largest element of $\{0,\ldots,n\}$ such that $x_j=y_i$.
Set $y_{i+1}:=x_{j+1}$ if $j <n$; otherwise, define $\LE\langle x_0,  \ldots, x_n \rangle:=\langle y_0, \ldots y_i \rangle$. 
Note that even if the directed walk is infinite, 
its loop erasure is still 
 well-defined provided that the walk is \emph{locally finite}, i.e., every vertex is visited at most finitely many times in the walk.

\smallskip

\begin{definition}
[Wilson's method for recurrent networks]
Let $(G,c)$ be a recurrent network.
Let $x_1,x_2,\ldots $ be an ordering of elements of the $V(G) \setminus \{r\}$.
Define a growing sequence 
$(\Tl(i))_{i \geq 0}$ of oriented trees recursively as follows:
%$(T(i))_{i \geq 0}$ of trees recursively as follows:
\begin{itemize}
\item Set 
$\Tl(0)$ 
%$T(0)$
to be the tree with  the single vertex $r$ and with no edges.
\item Suppose that 
$\Tl(i)$ 
%$T(i)$
has been generated.
Start an independent network random walk at $x_{i+1}$ and stop it at the first time it hits 
$\Tl(i)$ 
%$T(i)$
(note that the random walk hits $\Tl(i)$ a.s.\ by recurrence).
Let $\langle y_0,\ldots, y_m \rangle$ be the loop erasure of this random walk.
\item 
Set 
$\Tl(i+1)$
%$T(i+1)$ 
to be the oriented tree obtained by adding the
oriented
 edges 
$(y_0,y_1)$, $(y_1,y_2)$, $\ldots$, $(y_{m-1},y_m)$ to $\Tl(i)$.
% $\{y_0,y_1\}$, $\{y_1,y_2\}$, $\ldots$, $\{y_{m-1},y_m\}$ to $T(i)$.
\item The output of this algorithm is  
%oriented spanning tree $\bigcup_{i \geq 0} \Tl(i)$. \qedhere
 $\Tl:=\bigcup_{i \geq 0} \Tl(i)$. 
\end{itemize}
\end{definition}

\smallskip

The oriented spanning forest sampled using Wilson's method has the law of the $r$-oriented wired spanning forest, due to the following  theorems.

\smallskip

\begin{theorem}[{\cite[Theorem 1]{Wil96}}]\label{theorem: wsf finite}
Let  $G$ be a finite graph.
Then, regardless of the  ordering of $V(G) \setminus \{r\}$, the oriented tree $\Tl$ sampled using Wilson's method  has the law of
$\arrow{\wsf}_r(G,c)$. \qed 
\end{theorem}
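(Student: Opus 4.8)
The statement to prove is Theorem~\ref{theorem: wsf finite}: Wilson's method on a finite network $(G,c)$ produces an $r$-oriented spanning tree whose law is exactly $\arrow{\wsf}_r(G,c)$, meaning each tree $\Tl$ appears with probability proportional to its weight $\Xi(\Tl) = \prod_{(x,y)\in\El(\Tl)} c\{x,y\}$.

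The plan is to compute the probability that Wilson's algorithm outputs a fixed target tree $\Tl$ and show it is proportional to $\Xi(\Tl)$. The standard and cleanest route is the \emph{cycle popping} argument of Propp and Wilson. First I would reformulate the loop-erased random walks in terms of a stack of independent samples: at each vertex $x\neq r$, place an infinite stack of i.i.d.\ arrows, where each arrow points to neighbor $y$ with probability proportional to $c\{x,y\}$ (i.e., drawn from the network-walk step distribution). The top arrows of all stacks define a configuration on $V(G)\setminus\{r\}$; whenever the top arrows form a directed cycle, one ``pops'' that cycle, exposing the next layer. The key combinatorial lemma is that the final popped configuration (an $r$-oriented spanning tree, once no poppable cycles remain) and the multiset of popped cycles are \emph{independent of the order} in which cycles are popped. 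This order-independence is what makes Wilson's method (which pops cycles in a particular order determined by the walk) equivalent to any other popping scheme, and in particular shows the output law does not depend on the chosen ordering of $V(G)\setminus\{r\}$.

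Granting the order-independence lemma, I would then compute the output distribution directly. The probability of seeing any particular collection of popped cycles $\mathcal{C}$ together with final tree $\Tl$ factors as a product over all the arrows involved, and since the set of poppable cycle-collections is independent of $\Tl$, the dependence on $\Tl$ enters only through the product of arrow-probabilities forming $\Tl$ itself. Each arrow $(x,y)$ in $\Tl$ contributes a factor $c\{x,y\}/\pi(x)$ where $\pi(x)=\sum_{z}c\{x,z\}$ is the normalizing sum at $x$; summing over all possible cycle-collections $\mathcal{C}$ gives a factor independent of $\Tl$. Hence $\Pr{\text{output}=\Tl}$ is proportional to $\prod_{(x,y)\in\El(\Tl)} c\{x,y\} = \Xi(\Tl)$, which is exactly the defining weight in Definition~\ref{definition: wsf finite graphs}. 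Normalizing over all $r$-oriented spanning trees recovers $\arrow{\wsf}_r(G,c)$.

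The main obstacle is establishing the order-independence of cycle popping rigorously, since everything else is a bookkeeping of products of transition probabilities. One must argue that if a cycle $C$ is poppable at some stage under one ordering but a different cycle is popped first, then $C$ (in the same stack layer) remains poppable later; a careful induction on the number of pops, tracking that popping a cycle elsewhere cannot destroy the poppability of $C$ nor alter which layer $C$'s arrows occupy, yields the result. A secondary point worth verifying is termination: by recurrence (here automatic, as $G$ is finite) the network walk from each $x_{i+1}$ hits the current tree almost surely, so only finitely many cycles are popped and the algorithm terminates with a genuine $r$-oriented spanning tree. I would cite the cycle-popping framework and present the product computation as the heart of the argument.
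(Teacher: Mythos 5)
Your proof is correct and takes essentially the same route as the paper's source: the paper does not prove this statement itself but quotes it as \cite[Theorem~1]{Wilson96}, and Wilson's argument is precisely the cycle-popping scheme you outline --- stacks of i.i.d.\ arrows drawn from the network-walk step distribution, order-independence of popping (for cycles tagged with their stack layer, established by the induction you sketch), and the factorization of the output probability into a tree-independent sum over poppable cycle collections times $\prod_{(x,y)\in\El(\Tl)}c\{x,y\}/\pi(x)$, which is proportional to $\Xi(\Tl)$ since $\prod_{x\neq r}\pi(x)^{-1}$ does not depend on $\Tl$. Your treatment of termination and of why the output law is independent of the ordering of $V(G)\setminus\{r\}$ also matches the standard proof, so nothing further is needed.
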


\smallskip

\begin{theorem}[{\cite[Proposition~5.6]{BLPS01}}]\label{theorem: wsf recurrent}
Let $(G,c)$ be a recurrent network.
Then for any finite subset $\Bl$ of $\arrow{E}(G)$, any ordering of $V(G) \setminus \{r\}$, and any wired exhaustion of $G$,
\[ \mathbb{P}[\Bl\subseteq \Tl] =\lim_{n \to 0} \  \arrow{\mu_{r,n}}[\Bl \subseteq \Tl_n],  \]
where $\Tl$ is a random  tree of $G$ generated using Wilson's method, 
%was: with orientation and root forgotten, 
and ${\Tl_n}$ is a random  tree of $G_n$ distributed according to $\arrow{\mu_{r,n}}$.
\qed 
\end{theorem}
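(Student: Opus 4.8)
The plan is to reduce everything to Wilson's method on the finite graphs $G_n$, where Theorem~\ref{theorem: wsf finite} already identifies the output with $\arrow{\mu_{r,n}}$, and then to couple the network random walks on $G$ with those on $G_n$ so that the two tree-generation procedures agree on every edge relevant to $\Bl$ with probability tending to $1$. First I would fix the ordering $x_1,x_2,\ldots$ of $V(G)\setminus\{r\}$ and set $J:=\max\{j : x_j \text{ is the tail of some edge of } \Bl\}$, a deterministic finite index; if some edge of $\Bl$ has tail $r$ then, since $r$ has outdegree $0$ in every $r$-oriented tree, both $\Pr{\Bl\subseteq\Tl}$ and $\arrow{\mu_{r,n}}[\Bl\subseteq\Tl_n]$ vanish, so I assume otherwise. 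By Theorem~\ref{theorem: wsf finite}, Wilson's method on $G_n$ rooted at $r$ produces a tree with law $\arrow{\mu_{r,n}}$ for \emph{any} ordering, so for $n$ large enough that $x_1,\ldots,x_J\in W_n$ I may realize $\Tl_n$ by running Wilson's method on $G_n$ with the induced ordering $x_1,x_2,\ldots$ (placing $z_n$ last, say), giving $\arrow{\mu_{r,n}}[\Bl\subseteq\Tl_n]=\Pr{\Bl\subseteq\Tl_n}$.

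Next I would build the coupling. Because the wired exhaustion only collapses $V(G)\setminus W_n$ to $z_n$ while preserving the conductance of each edge inside $W_n$, the network random walk in $G$ and the one in $G_n$ can be run from common randomness so that they coincide until the first exit of $W_n$ (at which instant the $G_n$-walk sits at $z_n$). Running the walks of Wilson's method under this coupling, an induction on $i$ shows that if the first $i-1$ walks in $G$ have stayed inside $W_n$, then the partial trees $\Tl(i-1)$ agree in $G$ and $G_n$ and the $i$-th walks coincide until they either hit this common partial tree or leave $W_n$. Let $E_n$ be the event that the first $J$ walks of Wilson's method on $G$ stay inside $W_n$. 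On $E_n$ those walks hit the common tree without leaving $W_n$, so $\Tl(J)$ agrees in $G$ and $G_n$; hence the out-edges of $x_1,\ldots,x_J$ — in particular of all tails of $\Bl$ — coincide in $\Tl$ and $\Tl_n$, and these out-edges never change afterward. Therefore $\{\Bl\subseteq\Tl\}$ and $\{\Bl\subseteq\Tl_n\}$ agree on $E_n$.

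Finally I would control $\Pr{E_n^c}$ using recurrence. Each of the finitely many walks $1,\ldots,J$ is run until it hits the current partial tree, which contains $r$; by recurrence it hits $r$ almost surely in finite time, hence visits an almost surely finite set of vertices. Writing $R$ for the (almost surely finite) union of the vertices visited by these $J$ walks in $G$, and using $W_n\uparrow V(G)$, I get $\Pr{E_n}\ge\Pr{R\subseteq W_n}\to 1$. Combining this with the previous paragraph yields $\left|\Pr{\Bl\subseteq\Tl}-\arrow{\mu_{r,n}}[\Bl\subseteq\Tl_n]\right|\le\Pr{E_n^c}\to 0$, which is the desired identity.

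The step I expect to be the main obstacle is the stabilization in the second paragraph: one must be sure that the finitely many out-edges touching $\Bl$ are determined by finitely many almost surely finite walks that eventually lie inside any exhausting window, so that the temporary excursions toward $z_n$ made by \emph{later} walks in $G_n$ are genuinely irrelevant. The deterministic bound $J$ on the number of walks that need to be examined, together with the fact that recurrence makes each such walk almost surely finite, is precisely what removes the apparent dependence on the far-away geometry and makes the coupling argument rigorous.
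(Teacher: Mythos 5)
Your argument is correct and is essentially the approach the paper takes: the statement itself is quoted from \cite{BLPS01}, and the paper's own proof of the transient counterpart (Theorem~\ref{theorem: wsf transient}) uses exactly your coupling --- the same fixed ordering, walks on $G$ stopped upon exiting $W_n$ to emulate the $G_n$-walks, and the observation that on the event that the finitely many relevant walks stay inside $W_n$ the partial trees in $G$ and $G_n$ coincide (so the out-edges of the tails of $\Bl$ stabilize), an event whose probability tends to $1$ by recurrence here, just as local finiteness via transience plays that role there. One small precision: take $n$ large enough that \emph{all endpoints} of edges of $\Bl$ (heads as well as tails) lie in $W_n$, so that $\Bl \subseteq \El(G_n)$ and the event $\{\Bl \subseteq \Tl_n\}$ is well defined --- your index $J$ controls only the tails, which suffices for the stabilization argument but not for well-definedness.
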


\smallskip

We remark that 
\cite{BLPS01} stated only the unoriented version of Theorem~\ref{theorem: wsf recurrent}, but their argument in fact proves the oriented version as well.
%and the  unoriented version of Theorem~\ref{theorem: wsf recurrent} follows by removing the orientation.
As a consequence of Theorem~\ref{theorem: wsf recurrent}, we have that, for every recurrent network, the limit in \eqref{equation: infinite-volume limit oriented wsf}
exists and does not depend on the choice of the wired exhaustion.

\medskip

\subsection{Wilson's method oriented toward a root: transient case}\label{subsection: Wilson transient}

 In this subsection we describe an algorithm  that generates $\arrow{\wsf}_r(G,c)$ for transient networks without using the weak limit in Definition~\ref{definition: wsf all graphs}.

For any walk $\langle x_i \mid 0 \leq i < I \rangle$ (including the case $I=\infty$), we denote by  $\El(\langle x_i \mid 0 \leq i < I \rangle)$ the set of oriented edges $\{(x_i,x_{i+1}) \mid 0 \leq i < I-1\}$, and we denote by $\El(\mathsf{R}(\langle x_i \mid 0 \leq i < I\rangle))$ the set of oriented edges $\{(x_{i+1},x_i) \mid 0 \leq i < I-1\}$.

\smallskip

\begin{definition}[Wilson's method for transient networks]\label{definition: Wilson method transient network}
Let $(G,c)$ be a transient network.
Let $x_1,x_2,\ldots $ be an ordering of elements of $V(G) \setminus \{r\}$.
Define a growing sequence $(\Fl(i))_{i \geq 0}$ of oriented forests recursively as follows:

\begin{itemize}
\item Start a network random walk at $r$ that runs indefinitely. 
This random walk is locally finite a.s.\ by transience. 
Let $\langle y_0,y_1,\ldots\rangle$ be the loop erasure of this random walk.
Set $\Fl(0)$ to be the tree oriented toward $r$ given by
\[ V(\Fl(0)):=\{y_i \mid i \geq 0 \}; \qquad \El(\Fl(0)):=\El(\mathsf{R}(\langle y_i  \mid i \geq 0\rangle)).  \]

\item Suppose that $\Fl(i)$ has been generated.
Start a network random walk at $x_{i+1}$. 
Stop the walk the first time it hits $\Fl(i)$; if it never hits $\Fl(i)$ then let it run indefinitely.
This walk is locally finite a.s.\ by transience.
Let $\langle y_0',y_1',\ldots\rangle$ be the loop erasure of this random walk.

\item Set $\Fl(i+1)$ to be the oriented forest obtained by adding the edges in $\El(\langle y'_i  \mid i \geq 0\rangle)$ to $\Fl(i)$.

\item The output of this algorithm  is  $\Fl:=\bigcup_{i \geq 0} \Fl(i)$.  \qedhere
\end{itemize}
\end{definition}
 
\smallskip 
 
We remark that this algorithm is identical to Wilson's method oriented toward infinity~\cite{BLPS01} except for the first step, where we take  the oriented edges from $\El(\mathsf{R}(\langle y_i  \mid i \geq 0\rangle))$ instead of $\El(\langle y_i  \mid i \geq 0\rangle)$.
This difference  causes the output to be a forest oriented toward $r$ instead of  toward infinity.
We refer to \cite{BLPS01} and \cite{Hut18} for other methods  to sample wired spanning forest oriented toward infinity.

The subgraph  sampled using this method has the law of the $r$-oriented wired spanning forest, due to the following  theorem.

\smallskip

\begin{theorem}[cf.{\cite[Theorem~5.1]{BLPS01}}]\label{theorem: wsf transient}
Let $(G,c)$ be a transient network.
Then for any finite subset $\Bl$ of $\El(G)$, any  ordering of $V(G) \setminus \{r\}$, and any wired exhaustion of $G$,
\[ \mathbb{P}[\Bl\subseteq \Fl] =\lim_{n \to 0} \  \arrow{\mu_{r,n}}[\Bl \subseteq \arrow{T_n}],  \]
where $\Fl$ is a random oriented forest of $G$ generated using Wilson's method oriented toward $r$ (Definition~\ref{definition: Wilson method transient network}), and $\arrow{T_n}$ is a random oriented tree of $G_n$ distributed according to $\arrow{\mu_{r,n}}$.
\end{theorem}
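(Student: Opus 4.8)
As in the recurrent case (Theorem~\ref{theorem: wsf recurrent}), the plan is to reduce to Wilson's method oriented toward infinity, whose convergence to the wired spanning forest is \cite[Theorem~5.1]{BLPS01}, and to keep track of how that limit is affected by the orientation of the component of $r$. On each finite graph $G_n$ of the wired exhaustion I would compare $\arrow{\mu_{r,n}}$ with $\arrow{\mu_{z_n,n}}:=\arrow{\wsf}_{z_n}(G_n,c_n)$, the wired spanning forest of $G_n$ oriented toward the contracted vertex $z_n$. The elementary fact I rely on is that two orientations of the same unoriented spanning tree of a finite graph, one toward $r$ and one toward $z_n$, differ precisely by reversing the edges on the unique tree-path between $r$ and $z_n$; write $\mathsf{R}_\gamma$ for the operation reversing the orientations of the edges of a path $\gamma$. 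Running Wilson's method on $G_n$ rooted at $z_n$ with $r$ placed first in the ordering, the initial loop-erased walk $\gamma_n$ (the loop erasure of the network random walk from $r$ stopped when it hits $z_n$) is exactly the $r$--$z_n$ path of the resulting tree $\Tl_n\sim\arrow{\mu_{z_n,n}}$, and therefore $\mathsf{R}_{\gamma_n}(\Tl_n)\sim\arrow{\mu_{r,n}}$.

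The same comparison persists in the limit. If I drive the algorithm defined above (Wilson's method on $G$ oriented toward $r$) and Wilson's method oriented toward infinity with first vertex $r$ by the same random walks, then the first step produces in both algorithms the same trunk $\gamma_\infty$ (the loop erasure of the walk from $r$ run forever), with the same underlying edge set. Every subsequent walk is then governed by identical increments, stops at the same time, and is appended with the same forward orientation in both algorithms; hence the two outputs have a common underlying unoriented forest and differ only in that $\gamma_\infty$ points toward $r$ in the output $\Fl$ and toward infinity in the output $\Fl_\infty$ of \cite{BLPS01}. In other words $\Fl=\mathsf{R}_{\gamma_\infty}(\Fl_\infty)$. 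The convergence supplied by \cite[Theorem~5.1]{BLPS01}, which includes the convergence of the first loop-erased walk $\gamma_n\to\gamma_\infty$, gives joint convergence of $(\Tl_n,\gamma_n)$ to $(\Fl_\infty,\gamma_\infty)$ on every finite edge set, so it remains only to verify that $\mathsf{R}$ commutes with this limit on the event $\{\Bl\subseteq\,\cdot\,\}$.

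This commutation is where I expect the only genuine difficulty, because $\mathsf{R}_\gamma$ is non-local: whether an oriented edge of $\Bl$ survives the reversal depends on whether its underlying edge lies on the trunk, which a priori might be influenced by the behaviour of $\gamma_n$ far away near $z_n$. The resolution is that the target event is local. Let $S$ be the finite set of edges incident to the endpoints of the edges in $\Bl$. Then $\satubb\{\Bl\subseteq\mathsf{R}_{\gamma_n}(\Tl_n)\}$ is a fixed function of the restriction of $\Tl_n$ to $S$ together with the intersection $\gamma_n\cap S$: an oriented edge $(a,b)\in\Bl$ belongs to $\mathsf{R}_{\gamma_n}(\Tl_n)$ if and only if either $(a,b)\in\Tl_n$ and $\{a,b\}\notin\gamma_n$, or $(b,a)\in\Tl_n$ and $\{a,b\}\in\gamma_n$. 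By transience the walk from $r$ leaves any ball containing $S$ and, with probability tending to $1$ as the ball grows, never returns to it; consequently the loop erasures satisfy $\gamma_n\cap S=\gamma_\infty\cap S$ for all large $n$ with high probability, which is the loop-erased-walk convergence that already underlies \cite[Theorem~5.1]{BLPS01}. Passing the joint convergence of the restriction of $\Tl_n$ to $S$ and of $\gamma_n\cap S$ through this fixed function then yields
\[\arrow{\mu_{r,n}}[\Bl\subseteq\arrow{T_n}]=\Pr{\Bl\subseteq\mathsf{R}_{\gamma_n}(\Tl_n)}\to\Pr{\Bl\subseteq\mathsf{R}_{\gamma_\infty}(\Fl_\infty)}=\Pr{\Bl\subseteq\Fl},\]
which is the assertion of the theorem.
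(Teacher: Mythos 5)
Your proposal is correct and takes essentially the same route as the paper's proof: the paper likewise couples Wilson's method on $G_n$ rooted at $z_n$ with the infinite-volume algorithm by driving both with the same random walks (with $r$ first in the ordering), obtains the $\arrow{\mu_{r,n}}$ law by reversing the orientation along the first loop-erased branch from $r$ to $z_n$ (its map $h$ is your $\mathsf{R}_{\gamma_n}$), and concludes from the a.s.\ stabilization of loop erasures that transience provides. The only cosmetic difference is that you invoke \cite[Theorem~5.1]{BLPS01} as a black box and then commute the reversal with the limit via locality of the event $\{\Bl \subseteq \cdot\}$, whereas the paper re-runs the coupled computation directly, expressing both probabilities through the same walks and proving $\tau_n^j \to \tau^j$ by induction on $j$.
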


\smallskip

As a consequence of Theorem~\ref{theorem: wsf transient}, we have that for all transient networks the limit in \eqref{equation: infinite-volume limit oriented wsf} 
exists and does not depend on the choice of the wired exhaustion.
 Our proof of Theorem~\ref{theorem: wsf transient} is paraphrased from its counterpart in \cite{BLPS01}.
 
 \smallskip
 
\begin{proof}[Proof of Theorem~\ref{theorem: wsf transient}]
For any locally finite walk $\langle x_i  \mid i \geq 0\rangle$, 
we have $\LE\langle x_i  \mid i < I\rangle \to \LE\langle x_i  \mid i \geq 0\rangle$ as $I \to \infty$.
That is, if $\LE\langle x_i  \mid i \leq I\rangle=\langle y_{I,i} \mid i \leq m_I\rangle$ and $\LE\langle x_i  \mid i \geq 0 \rangle=\langle y_{i} \mid i \geq 0\rangle$, then for every  $i$  and all sufficiently large $I$ we have $y_{I,i}=y_i$.
Since $G$ is transient,
it follows that $\LE\langle X_i  \mid i < I\rangle \to \LE\langle X_i  \mid i \geq 0\rangle$ as $I \to \infty$ a.s., where 
$\langle X_i  \mid i \geq 0\rangle$ is a network random walk starting from any fixed vertex of $G$.

Let $x_1,x_2,\ldots$ be the ordering of $V(G) \setminus \{r\}$ used in  Wilson's method for $G$.
Write $x_0:=r$.
Let $L$ be a sufficiently large integer such that the endpoints of all edges in $\Bl$ are contained in $x_0,x_1,\ldots, x_L$.
Let $\langle X_i^j \mid i \geq 0\rangle$ be independent random walks on $G$  that start at $x_j$ ($j \in \{0,\ldots,L\}$).

Let $n$ be sufficiently large so that the wired exhaustion $W_n$ contains 
$x_0,\ldots, x_L$.
Run Wilson's method rooted at $z_n$ in $G_n$ with an ordering of $V(G_n)\setminus \{z_n\}$ that starts with $x_0,\ldots,x_L$, using the walks $\langle X_i^j \mid i \geq 0 \rangle$ for $j \in \{0,\ldots,L\}$.
Since these walks are on $G$ rather than $G_n$, we simply stop the random walks once they leave the set $W_n$ and say that they have hit $z_n$.
In this way, we can couple the random walk in $G_n$ that starts at $x_j$ with the random walk in $G$ that starts at $x_j$
 by using the same (infinite) random walk $\langle X_i^j \mid i \geq 0\rangle$ for $j \in \{0,\ldots,L\}$.

Let $\Tl'_n$ be the random spanning tree of $G_n$ oriented toward $z_n$ picked using  Wilson's method for $G_n$ as  described in the previous paragraph.
Note that $\Tl'_n$ has the law of $\arrow{\wsf}_{z_n}(G_n,c_n)$ by  Theorem~\ref{theorem: wsf finite}.

Let $h$ be the map from $z_n$-oriented spanning trees of $G_n$ to $r$-oriented spanning trees of $G_n$ that reverses the orientation of all edges in the unique directed path from $r$ to $z_n$.
Note that $h$ is a bijection that preserves the weight of spanning trees.
Write $\Tl_n:=h(\Tl_n')$.
It then follows from definition of oriented wired spanning forest for finite graphs (Definition~\ref{definition: wsf finite graphs}) that 
$\Tl_n$ has the law of $\arrow{\wsf}_{r}(G_n,c_n)$.

Let $\tau^j_n$ be the first time that $\langle X_i^j \mid i \geq 0\rangle$ reaches the portion of the spanning tree created by the preceding random walks $\langle X_i^l \mid i \geq 0\rangle$ for $(l <j)$ using Wilson's method for $G_n$ oriented toward $z_n$. 
Note that we have:
\begin{equation}\label{equation: wilson transient 1}
  \arrow{\mu_{r,n}}[\Bl \subseteq \Tl_n]=  \mathbb{P} \left[ \Bl \subseteq  \, \El(\mathsf{R}(\LE\langle X_i^0 \mid i \leq \tau_n^0 \rangle))  \, \cup \, \bigcup_{j=1}^L \El(\LE\langle X_i^j \mid i \leq \tau_n^j \rangle)  \right].
  \end{equation}

Let $\tau^j$  be the 
first time that $\langle X_i^j \mid i \geq 0\rangle$ reaches the portion of the spanning tree created by the preceding random walks $\langle X_i^l \mid i \geq 0\rangle$ for $(l <j)$ using Wilson's method for $G$ oriented toward $r$. 
Note that we have
\begin{equation}\label{equation: wilson transient 2}
 \mathbb{P} [\Bl \subseteq \Fl]=  \mathbb{P} \left[ \Bl \subseteq  \, \El(\mathsf{R}(\LE\langle X_i^0 \mid i \leq \tau^0 \rangle))  \, \cup\,  \bigcup_{j=1}^L \El(\LE\langle X_i^j \mid i \leq \tau^j \rangle) \right],
  \end{equation}
  where $\Fl$ is the  oriented spanning forest generated using Wilson's method for $G$.
Since the random walks used in Wilson's method for $G$ and Wilson's method for $G_n$ are the same, it follows from induction on $j$ that $\tau_n^j\to \tau^j$ as $n \to \infty$.
Together with \eqref{equation: wilson transient 1} and \eqref{equation: wilson transient 2}, this implies the conclusion of the theorem.
\end{proof}

\medskip

\subsection{Tail triviality}\label{subsection: tail triviality}
An important property of the wired spanning forest (which will be used in proving Theorem~\ref{theorem: intro ergodic})
is that it is a tail trivial measure.

%An important property of the  wired spanning forest is that it is \emph{tail trivial}, as elaborated in the next theorem.

We first define tail triviality for measures on unoriented subgraphs.  
For any subset $K \subseteq E(G)$, let $\Fscr(K) \subseteq \Fscr$ denote the $\sigma$-algebra of events that depend only on $K$.
An event  $\Bscr \in \Fscr$   is a \emph{tail event} if $\Bscr \in \Fscr(E \setminus K)$ for all finite $K \subseteq E$.
A measure $\pi$ on $\Fscr$ is \emph{tail trivial} if, for every tail event $\Bscr \in \Fscr$, we have $\pi[\Bscr] \in \{0,1\}$. 

\smallskip

\begin{theorem}[{\cite[Theorem 10.18]{LP16}}]\label{lemma: wsf is tail trivial}
	For every tail event $\Bscr \in \Fscr$, we have $\wsf[\Bscr] \in \{0,1\}$. \qed
\end{theorem}

\smallskip

We now define tail triviality for measures on oriented subgraphs analogously.  
%Recall that  $\Fscrl:=\Fscrl(G)$ is  the $\sigma$-algebra on the set of  oriented subgraphs of $G$  generated by sets of the form
%$\{\Hl \in 2^{E(G)}  \mid \Bl \subseteq \Hl\}$, where  $\Bl$ is a finite subset of $\El(G)$.
For any subset $\Kl \subseteq \El(G)$, let $\Fscrl(\Kl) \subseteq \Fscrl$ denote the $\sigma$-algebra of events that depend only on $\Kl$.
	An event  $\Bscrl \in \Fscrl$   is a \emph{tail event} if $\Bscrl \in \Fscrl(\El \setminus \Kl)$ for all finite $\Kl \subseteq \El$. A measure $\pi$ on $\Fscrl$ is \emph{tail trivial} if, for every tail event $\Bscrl \in \Fscrl$, we have $\pi[\Bscrl] \in \{0,1\}$. 
We now show that the following oriented subgraph measure is tail trivial.

\smallskip 

\begin{definition}[Oriented wired spanning forest plus one edge]\label{definition: oriented wired spanning forest plus one edge}
	The \emph{$r$-oriented wired spanning forest plus one edge}, denoted  $\arrow{\wsf}_r^+:=\arrow{\wsf}_r^+(G,c)$,  is the law of the random subgraph $\Fl \sqcup \{(r, Y)\}$, where $\Fl$ is a random $r$-oriented forest of $G$ sampled from   $\arrow{\wsf}_r$ and $Y$ is a random neighbor of $r$ sampled from $\mu_r$ independently of $\arrow{F}$.
\end{definition}

\smallskip

%

%The tail triviality of $\arrow{\wsf}_r^+$ follows from the tail triviality of $\wsf$, proved in \S\ref{ust}.

\begin{lemma}\label{lemma: wsfp is tail trivial}
	For every tail event $\Bscrl \in \Fscrl$, we have $\arrow{\wsf}^+_r[\Bscrl]\in \{0,1\}$.
\end{lemma}
\begin{proof}
	Let $f: 2^{\El} \to 2^{E}$ be the map that takes an oriented subgraph and erases the orientation of every edge.
	Let $g: 2^{\El} \to 2^{\El}$ be the map that takes an oriented subgraph and removes any outgoing edges of $r$.

	Let $\Bscrl$ be a tail event in $\Fscrl$.
	Note that 
	$\arrow{\wsf}^+_r[\Bscrl]=\arrow{\wsf}_r[g(\Bscrl)]$ by 
	the definition of $\arrow{\wsf}^+_r$ and by the fact that $\Bscrl$ 
	does not depend on any outgoing edges of $r$.
	Also note that  $\arrow{\wsf}_r[g(\Bscrl)]=\wsf[f\circ g(\Bscrl)]$ by 
	Lemma~\ref{lemma: wsf and oriented wsf}. 
	Finally, note that 
	the set $f \circ g(\Bscrl)$ is a tail event in $\Fscr$ since $\Bscrl$ is a tail event in $\Fscrl$.
	The conclusion of the lemma now follows from the tail triviality of unoriented wired spanning forest (Theorem~\ref{lemma: wsf is tail trivial}).
\end{proof}

\bigskip

\section{A native environment for random walk with local memory}\label{stationarity}

In this section we show that the wired spanning forest measure  can be used to construct a native environment.
To rigorously define the notion of native environment,
the underlying RWLM needs to satisfy the conditions described below.

%Starting an RWLM from a native environment allows us to apply  ergodic theory in \S\ref{SLLN}.

%
%The main result of this section is Theorem~\ref{theorem: intro stationarity},
%which gives an explicit distribution as a native environment for the RWLM.

A graph $G$ is a \emph{Cayley graph} if 
\begin{itemize}
\item $V(G)$ is a group with identity element $\id$;
\item The group $V(G)$ is generated by a finite set $\Sc \subseteq V(G) \setminus \{\id\}$;
\item The set $\Sc$ is symmetric, i.e., if $x$ is in $\Sc$ then $x^{-1}$ is also in $\Sc$; and 
\item  $E(G)=\{ \{x,y\} \mid y^{-1}x \in \Sc  \}$. 
\end{itemize} 
The square lattice $\Z^2$ is an example of a Cayley graph
where the generating  set $\Sc$ is  $\{(\pm 1,0),(0,\pm 1)\}$ and  the group operation is vector addition.
Note that a Cayley graph  is locally finite (because  $\Sc$ is finite),  
 connected (because $\Sc$ is a generating set), and simple (because $\Sc$ does not contain $\id$).

A \emph{weighted Cayley graph}  $(G, c)$  is  a Cayley graph $G$
with a weight function  $c: \Sc \to \R_{>0}$ such that $c(x)=c(x^{-1})$ for all $x \in \Sc$. 
Note that the function $c :\Sc\to \R_{>0}$ extends naturally to a conductance  $c: E\to \R_{>0}$ on edges of $G$ by setting $c\{x,y\}:=c(y^{-1}x)= c(x^{-1}y)$ for all $\{x,y\} \in E$.

Recall the definition of the probability transition function $p_x(\cdot, \cdot)$ from Definition~\ref{definition: mechanism}.
For every vertex $x$ of $G$,
we denote by $\mu_x$ the probability distribution on neighbors of $x$ given by 
\begin{equation}\label{equation: mu}
	\mu_{x}(y) \ := \ \frac{c\{x,y\}}{\sum_{z \in \Nbh(x)} c\{x,z\}} \qquad (y \in \Nbh(x)).
\end{equation}

Note that the measure $\mu_{\id}$ is symmetric (i.e., $\mu_{\id}(x)=\mu_{\id}(x^{-1})$) as a consequence of  $c: \Sc \to \R_{>0}$ being symmetric.
%Note that $\mu_{x}(y)=\mu_{x}(y^{-1})$  if $c$ satisfies the condition in Definition~\ref{definition: Cayley network}.

%For the rest of the paper, we  assume that our RWLM satisfies the following properties.
An RWLM is \emph{transitive} if,
\begin{equation}\label{equation: transitive}
	p_x(y,y') = p_{gx}(gy,gy') \qquad \text{for every } \  x, g,y,y' \in V.  \tag{Tran}
\end{equation}
An RWLM  is \emph{c-stationary}  if, for every vertex $x$,
\begin{equation}\label{equation: c-stationary}
	\text{$\mu_x$ is a stationary distribution of the local chain $M_x$.}
	\tag{cSta}
\end{equation}
Intuitively, the transitivity condition requires that  the RWLM's mechanism at every vertex follow the same procedure.
We remark that  every  RWLM in \S\ref{rotor walks}, with $c$ being a constant function, is transitive and  $c$-stationary.

%\begin{definition}[Transitive mechanism]\label{definition: transitive mechanism}
%Let $(G,c)$ be a weighted Cayley graph.
%A mechanism $\{M_x\}_{x \in V}$ on $G$ is \emph{transitive}  if
%\begin{enumerate}
%[{
%label=\textnormal{(T{\arabic*})},
%ref={T\arabic*}}]
%\item \label{item: T1} For each $x \in V$, 
%\item \label{item: T2} For any $x,g\in V$, 
%
%\end{enumerate}
%\end{definition}

For the rest of this paper, every RWLM will be transitive and $c$-stationary, and the underlying graph will always be a weighted Cayley graph, unless stated otherwise.
Recall that $X_n$ denotes the location of the walker and $\rho_n$ denotes the rotor configuration at the $n$-th step of  RWLM.

\smallskip

\begin{definition}[Scenery process]\label{definition: scenery process}
%Consider an RWLM  $(X_n,\rho_n)_{n\geq 0}$ on a weighted Cayley graph that satisfies \eqref{equation: transitive} and \eqref{equation: c-stationary}.
%Let $(G,c)$ be a weighted Cayley graph.
%Let be an RWLM 
%with a transitive and $c$-stationary mechanism.
The    \emph{scenery process}  is the sequence $(\rel_n)_{n \geq 0}$ of rotor configurations given by
	\[\rel_n(x):=X_n^{-1}  \rho_n ( X_n x)  \qquad {(x\in V,\, n \geq 0).} \]
\end{definition}

\smallskip

Described in words, at each time step we apply a translation to the current rotor configuration so that the current location of the walker is mapped to the origin.
In this way,  $\rel_n$ is the rotor configuration \emph{as viewed from the perspective of the walker} at the $n$-th step of the RWLM.
		See Figure~\ref{figure: scenery process} for an illustration of a  scenery process.

Note that, as a consequence of \eqref{equation: transitive}, 
the scenery process $(\rel_n)_{n \geq 0}$ is a Markov chain with state space the set of rotor configurations of $G$ and with transition rule
\begin{equation}\label{equation: transition rule scenery process}
 \rel_{n+1}(x) :=
\begin{cases}\id &\text{ if } x= Y_{n}^{-1};\\
Y_n^{-1}\rel_n( Y_{n} x) &\text{ if } x\neq Y_{n}^{-1},
\end{cases}
\end{equation}
where $Y_n$ is a random neighbor of $\id$ sampled from $p_{\id}(\rel_n(\id),\cdot)$ independently of $\sigma(\rel_0,\ldots, \rel_{n-1})$ (recall that $p_{\id}$ is the probability transition function of the local chain $M_{\id}$).
%We remark  that the transitivity assumption is necessary for this Markov chain to coincide with the scenery process from Definition~\ref{definition: scenery process}.

	\begin{figure}[t!]
\includegraphics[scale=0.7]{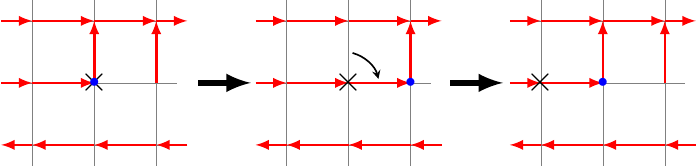} 
\caption{One step of the scenery process of a rotor walk on $\Z^2$ with clockwise rotation as its mechanism.
The location of the origin in the original process is marked by  the {$\times$} symbol, and the location of the walker is marked by the {\color{blue}$\bullet$} symbol.}
\label{figure: scenery process}
\end{figure}

\smallskip

\begin{definition}[Native environment]\label{definition: native environment}
A \emph{native environment}  is a probability distribution on   rotor configurations of $G$ 
such that, if the walker starts at $\id$ and 
the initial rotor configuration is sampled from the distribution,
then the scenery process 
  is a stationary sequence, i.e.,
   \[ (\rel_n)_{n \geq 0} \ \overset{d}{=} \ (\rel_{n+1})_{n \geq 0}. \] 
\end{definition}

\smallskip

Intuitively, a native environment means that, at each time step of the walk,
the rotor configuration  \emph{viewed from the perspective of the walker}  has the same law as the initial environment.
See Figure~\ref{figure: scenery process} for an illustration of a  native environment.

%We now present a native environment that is constructed from the oriented wired spanning forest measure $\arrow{\wsf}_r$ (Definition~\ref{definition: wsf all graphs}).
%
%

We now restate Theorem~\ref{theorem: intro stationarity} from the introduction (also the main result of this section) in a slightly more general form.
Recall the definition of $\arrow{\wsf}^+_{\id}:=\arrow{\wsf}^+_{\id}(G,c)$ from  Definition~\ref{definition: oriented wired spanning forest plus one edge}.

\smallskip

\begin{reptheorem}{theorem: intro stationarity}\label{thmstationarity}
	Consider an RWLM  on a weighted Cayley graph  that satisfies \eqref{equation: transitive} and \eqref{equation: c-stationary}.
 Then  $\arrow{\wsf}^+_{\id}$ is a native environment.
\end{reptheorem}

\smallskip

Note that  
$\arrow{\wsf}^+_{\id}$ 
is indeed a probability distribution  on rotor configurations of $G$.
This is because, by Wilson's method (see \S\ref{subsection: Wilson recurrent} and \S\ref{subsection: Wilson transient}),  the random subgraph sampled from  $\arrow{\wsf}_{\id}$ has exactly one outgoing edge for every $x \in V \setminus \{\id\}$ and no outgoing edge for $\id$. 
Hence  the random subgraph  sampled from $\arrow{\wsf}_{\id}^+$
has exactly one outgoing edge for every vertex, and  by  Remark~\ref{remark: rotor configurations dual}
it defines  a rotor configuration of $G$.

We remark that, when $G$ is a finite Cayley graph, 
Theorem~\hyperref[thmstationarity]{1.\thenumberstationarity} then specializes to the result of \cite{Bro89,Ald90} (for Aldous-Broder walk) and 
\cite[Lemma~3.4]{HLM08} (for rotor walk).

We now build toward the proof of Theorem~\hyperref[thmstationarity]{1.\thenumberstationarity}.
We will use the following identity, which 
 is a special case of {\cite[Lemma~2.4]{Lev11}} if the graph $G$ is finite.

\smallskip

\begin{lemma}\label{lemma: wsfplus  sum}
Let $(G,c)$ be an electrical network, and let $r$ be a vertex.
Let $Y$ be a random neighbor of $r$ sampled from $\mu_r$, and  let $\arrow{F_{Y}}$ be a random oriented spanning forest of $G$ sampled from  $\arrow{\wsf}_{Y}$.
Then the random oriented subgraph $\arrow{F_{Y}}  \, \sqcup \, \{ (Y, r) \}$ has the distribution $\arrow{\wsf}^+_r$.
\end{lemma}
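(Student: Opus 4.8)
The plan is to establish the identity first for finite networks, where it reduces to a comparison of spanning‑tree weights, and then to transfer it to infinite networks through a wired exhaustion, exploiting the fact that both distributions are determined by their restrictions to finite edge sets.

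\textbf{Finite case.} Suppose $G$ is finite. Both $\arrow{F_Y}\sqcup\{(Y,r)\}$ and $\arrow{F}\sqcup\{(r,Y)\}$ (with $\arrow{F}$ sampled from $\arrow{\wsf}_r$ and $Y$ from $\mu_r$ independently) are rotor configurations, i.e.\ oriented subgraphs with out‑degree exactly one at every vertex, so each is a connected spanning graph with a single cycle, and that cycle passes through $r$; in fact both constructions have the common support consisting of all such configurations. I would fix one such configuration $\sigma$ and compute the probability assigned to it by each side. For $\arrow{F_Y}\sqcup\{(Y,r)\}$, the configuration $\sigma$ forces $Y$ to be the predecessor of $r$ on the cycle, say $Y^\ast$, and deleting the cycle edge $(Y^\ast,r)$ leaves a $Y^\ast$‑oriented spanning tree of weight $\Xi(\sigma)/c\{r,Y^\ast\}$; hence its probability equals
\[ \mu_r(Y^\ast)\,\frac{\Xi(\sigma)/c\{r,Y^\ast\}}{Z_{Y^\ast}} = \frac{\Xi(\sigma)}{Z_{Y^\ast}\sum_{z\in\Nbh(r)} c\{r,z\}}, \]
where $Z_w:=\sum_{\arrow{T}}\Xi(\arrow{T})$ is the total weight of $w$‑oriented spanning trees and the conductance $c\{r,Y^\ast\}$ cancels. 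The same computation for $\arrow{F}\sqcup\{(r,Y)\}$ (now $Y=\sigma(r)$ is forced) yields $\Xi(\sigma)/\bigl(Z_{r}\sum_{z\in\Nbh(r)}c\{r,z\}\bigr)$. The two agree because $Z_w$ is independent of the root $w$: orienting an unoriented spanning tree toward a prescribed root is a weight‑preserving bijection, so $Z_w=\sum_{T}\Xi(T)$ for every $w$. This is exactly \cite[Lemma~2.4]{Lev11}.

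\textbf{Passage to the limit.} For an infinite network $(G,c)$ I would fix a wired exhaustion $(G_n,c_n)_{n\geq 0}$ and a finite $\Bl\subseteq\El(G)$, and take $n$ large enough that $r$ and all of its neighbors lie in $W_n$, so that $\mu_r$ and the conductances at $r$ coincide in $G_n$ and in $G$. Applying the finite case to each $G_n$ gives, for all such $n$,
\[ \sum_{y\in\Nbh(r)}\mu_r(y)\,\arrow{\mu_{y,n}}\!\left[\Bl\subseteq \arrow{T_n}\sqcup\{(y,r)\}\right] = \arrow{\mu_{r,n}}\!\left[\Bl\subseteq \arrow{T_n}\sqcup\{(r,Y)\}\right], \]
where on each side the extra edge is attached as in Definition~\ref{definition: oriented wired spanning forest plus one edge} and the orientation of $\arrow{T_n}$ is that dictated by the relevant measure. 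By Definition~\ref{definition: wsf all graphs}, for each fixed root $w$ the measures $\arrow{\mu_{w,n}}$ converge to $\arrow{\wsf}_w$ on every event depending on finitely many edges; splitting $\Bl$ according to whether an edge is outgoing from the relevant sink lets me evaluate each finite‑edge event and pass to the limit in $n$. Since the left‑hand sum runs over the finitely many neighbors of $r$, the limit commutes with the sum, and both sides converge to $\Pr{\Bl\subseteq\arrow{F_Y}\sqcup\{(Y,r)\}}$ and $\arrow{\wsf}^+_r\!\left[\Bl\right]$ respectively. As $\Bl$ was arbitrary, the two laws coincide.

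\textbf{Main obstacle.} The substantive point is the finite‑graph identity, whose crux is the root‑independence of the partition function $Z_w$ together with the cancellation of the conductance of the attached edge. The limiting argument is then routine: the only feature requiring care is that the $\arrow{F_Y}\sqcup\{(Y,r)\}$ side involves a random sink $Y$, but since this amounts to a finite sum over the neighbors of $r$, no uniformity beyond the fixed‑root convergence of Definition~\ref{definition: wsf all graphs} is needed.
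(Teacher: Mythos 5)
Your proof is correct and follows essentially the same route as the paper's: reduce to the finite case, where both laws are supported on spanning unicycles through $r$ with probability proportional to $\Xi(\cdot)$ (your explicit computation with the cancellation of $c\{r,Y^\ast\}$ and the root-independence of $Z_w$ is just the detailed version of the paper's one-line weight argument), and then pass to the infinite case along a wired exhaustion by checking events depending on finitely many edges. The paper leaves both steps terser, but there is no substantive difference.
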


\smallskip

\begin{proof}
It suffices to consider the case when $G$ is a finite graph,
as  the case of infinite graphs follows  by  taking the limit over a wired exhaustion and then  verifying the lemma for all events that depend on only finitely many edges.

When $G$ is a finite graph, note that  $\arrow{F_{Y}}  \, \sqcup \, \{ (Y, r) \}$ is concentrated on oriented spanning unicycles  rooted at $r$, i.e.,  oriented subgraphs of $G$ with one outgoing edge for every vertex of $G$ and one unique oriented cycle, where $r$ is contained in that oriented cycle.
Each unicycle $\Ul$  is picked with probability proportional to 
the product of the weight of its edges.
 This implies that 
$\arrow{F_{Y}}  \, \sqcup \, \{ (Y, r) \}$ is distributed  as $\arrow{\wsf}^+_r$, as desired.
\end{proof}

\smallskip

\begin{proof}[Proof of Theorem~{\hyperref[thmstationarity]{1.\thenumberstationarity}}]
Since $(\rel_n)_{n \geq 0}$ is a Markov chain,
it suffices to show that if $\rel_0$ is distributed as $\arrow{\wsf}^+_{\id}$, then $\rel_1$ is also distributed as   $\arrow{\wsf}^+_{\id}$.

Let $\Fl$ be the random  spanning forest of $G$ sampled from $\arrow{\wsf}_{\id}$.
Let $Y$ be a random neighbor of the identity sampled from $\mu_{\id}$ independently of $\Fl$.
For any $x \in V$, denote by $\tau_x: V(G) \to V(G)$ the network isomorphism of $(G,c)$ given by left multiplication by $x$. (A network isomorphism of $(G,c)$ is a graph isomorphism of $G$ which also preserves the conductance $c$.)

Since $\rho_0(\id)\overset{d}{=} Y$ and the RWLM satisfies  
\eqref{equation: c-stationary}, we have  $\rho_1(\id)\overset{d}{=} Y$.
By the transition rule  of RWLM (see \eqref{equation: transition rule RWLM}), we then have
$\rho_1 \overset{d}{=} \Fl  \, \sqcup \, \{ (\id,Y) \}$.
By the transition rule  of the scenery process (see \eqref{equation: transition rule scenery process}), we then have 
$\rel_1 \overset{d}{=} \tau_{Y^{-1}}(\Fl)  \, \sqcup \, \{ (Y^{-1}, \id) \}$.

Now note that 
 $Y\overset{d}{=}Y^{-1}$ since  $\mu_{\id}$ is symmetric, 
and  together with the conclusion of the previous paragraph this implies that  $\rel_1\overset{d}{=} \tau_{Y}(\Fl)  \, \sqcup \, \{ (Y, \id) \}$.
Also note that $\tau_{Y}(\Fl)$ is equal in distribution to the random spanning forest picked from $\arrow{\wsf}_{Y}$ since $\tau_Y$ is a network isomorphism of $(G,c)$.
It now follows from  Lemma~\ref{lemma: wsfplus  sum} 
that $\rel_1$ is distributed according to $\arrow{\wsf}^+_{\id}$, and the proof is complete.
\end{proof}

\bigskip

\section{Ergodic native environments}\label{section: ergodic}

In this section we prove Theorem~\ref{theorem: intro ergodic} by showing that $\arrow{\wsf}^+_{\id}$ is an ergodic native environment.
This requires tools from the ergodic theory of Markov chains,
which we quickly review in the next subsection,
and we  
 refer the reader to  \cite{HLL98} for a more detailed discussion on this subject.

\medskip

\subsection{Ergodic theory for Markov chains}\label{subsection: ergodic theory}
Let $M:=(\Omega, \Fscr,P)$ be a Markov chain,
where the  state space $\Omega$ is a  metric space, 
$\Fscr$ is the Borel $\sigma$-algebra of $\Omega$,
and  $P:\Omega\times \Fscr \to [0,1]$ is the probability transition function of this chain. 
A set $\Bscr \in \Fscr$ is \emph{invariant}  if $P(x,\Bscr)=1$ for all $x \in \Bscr$. 
A stationary distribution $\pi$ of $M$ is \emph{ergodic} if ${\pi}[\Bscr]\in \{0,1\}$ for any invariant set $\Bscr$. 
%By the remarks above, we can equivalently define $\pi$ to be ergodic if $\pi[\Bscr] \in \{0,1\}$ for any $\pi$-invariant set $\Bscr$.

%For a given stationary distribution $\pi$ of $M$, we say $\Bscr \in \Fscr$ is \emph{$\pi$-invariant} if $P(x,\Bscr)=\satubb_{\Bscr}(x)$, $\pi$-a.s.\
%
%We remark that a $\pi$-invariant set $\Bscr$  differs from
%the time-invariant set
%\[ \{x \in \Bscr \mid  P^{(n)}(x,\Bscr)=1 \ \  \forall ~ n \geq 0\},  \]
%by a set of $\pi$-measure zero.
%Conversely, 
% a time-invariant set $\Bscr$ differs from the $\pi$-invariant set
% 	\[ \{x \in \Omega \mid \exists~ n \geq 0 \text{ s.t. } P^{(n)}(x,\Bscr) >0\},\]
%by a set of $\pi$-measure zero.

Let $\Omega^{\N}$ be the \emph{trajectory space} of $M$,
\[ \Omega^{\N}:=\{(\omega_i)_{i \geq 0} \mid \omega_i \in \Omega\},\]
equipped with the product $\sigma$-algebra induced by $\Fscr$.
For any $\omega \in \Omega$ we denote by $P_{\omega}$ the probability distribution on $\Omega^{\N}$ given by:
\[{P_{\omega}}[\Ascr]:= \E{1_{\Ascr}(\omega_0,\omega_1,\ldots)} \qquad  (\Ascr \in \Fscr ),\]
where 
$(\omega_n)_{n\geq 0}$ is the Markov chain $M$ with initial state $\omega_0=\omega$, and $\mathbb{E}$ is the corresponding expectation function for this chain.

\smallskip

\begin{theorem}[Pointwise ergodic theorem~{\cite[Theorem~6.1(b)]{HLL98}}]\label{theorem: pointwise ergodic theorem}
	Let $M$ be a Markov chain on a compact  metric space $(\Omega, \Fscr)$, and let  $\pi$ be  an ergodic distribution of $M$.
	Then  for 
	every $\pi$-integrable function $f:\Omega \to \R$, 
	\[ 
	\lim_{n \to \infty} \frac{1}{n}\sum_{i=0}^{n-1} f(\omega_i)= \int_{\Omega} f \, d\pi \qquad P_{\omega}\text{-a.s.}, \]
	for $\pi$-almost every $\omega \in \Omega$. \qed
	% , where $(\omega_k)_{k \geq 0}$ is the Markov process corresponding to $M$ started at $\omega_0$. \qed
\end{theorem}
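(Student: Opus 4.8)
The plan is to deduce this Markov-chain ergodic theorem from the classical Birkhoff pointwise ergodic theorem for a single measure-preserving transformation, by lifting the chain to its trajectory space. First I would work on $\Omega^{\N}$ with the shift $\theta(\omega_0,\omega_1,\ldots):=(\omega_1,\omega_2,\ldots)$ and with the law $P_\pi:=\int_\Omega P_\omega\,d\pi(\omega)$ of the chain started from its stationary distribution. Since $\pi$ is stationary for $M$, a direct check on cylinder sets shows that $P_\pi$ is $\theta$-invariant, so $(\Omega^{\N},\theta,P_\pi)$ is a measure-preserving system. Taking $g(\omega):=f(\omega_0)$, one has $g(\theta^i\omega)=f(\omega_i)$, so $\frac{1}{n}\sum_{i=0}^{n-1}f(\omega_i)=\frac{1}{n}\sum_{i=0}^{n-1}g(\theta^i\omega)$, and Birkhoff's theorem (valid on any probability space) gives $P_\pi$-a.s.\ convergence of this average to $\Esub{P_\pi}{g\mid \mathcal{I}}$, the conditional expectation onto the $\sigma$-algebra $\mathcal{I}$ of $\theta$-invariant events.

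The heart of the matter is to upgrade ergodicity of $\pi$ in the sense of the paper (triviality of every set $\Bscr$ with $P(x,\Bscr)=1$ for all $x\in\Bscr$) to $P_\pi$-triviality of $\mathcal{I}$, which collapses the Birkhoff limit to the constant $\Esub{P_\pi}{g}=\int_\Omega f\,d\pi$. To this end I would take a $\theta$-invariant event $\Ascr$ and set $h(x):=P_x(\Ascr)$. Writing $\Gc_n:=\sigma(\omega_0,\ldots,\omega_n)$, the identity $\satubb_{\Ascr}=\satubb_{\Ascr}\circ\theta^n$ together with the Markov property gives $\Esub{P_\pi}{\satubb_{\Ascr}\mid \Gc_n}=h(\omega_n)$; hence $h(\omega_n)$ is a bounded martingale converging $P_\pi$-a.s.\ to $\satubb_{\Ascr}\in\{0,1\}$, and (from the martingale property together with the Markov property) $h$ is a bounded, $\pi$-a.e.\ harmonic function for the chain. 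The remaining sub-lemma is that ergodicity forces every such $h$ to be $\pi$-a.s.\ constant: examining the super-level sets $\{h\ge t\}$ and using harmonicity, I would produce a set satisfying the everywhere-invariance condition $P(x,\cdot)=1$ on which $\pi$ is concentrated, pinning $h$ to a single value $c$; combined with $h(\omega_n)\to\satubb_{\Ascr}$ this yields $P_\pi(\Ascr)=c\in\{0,1\}$.

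Finally I would transfer the conclusion from $P_\pi$ to the individual starting measures. Let $\Ascr_0$ be the trajectory event that $\frac{1}{n}\sum_{i=0}^{n-1}f(\omega_i)\to\int_\Omega f\,d\pi$; the previous two paragraphs give $P_\pi(\Ascr_0)=1$. Since $P_\pi=\int_\Omega P_\omega\,d\pi(\omega)$ and $0\le P_\omega(\Ascr_0)\le 1$, we get $\int_\Omega\bigl(1-P_\omega(\Ascr_0)\bigr)\,d\pi=0$, whence $P_\omega(\Ascr_0)=1$ for $\pi$-almost every $\omega$, which is exactly the assertion. I expect the main obstacle to be the harmonic-function sub-lemma in the middle paragraph: reconciling the paper's restrictive everywhere condition $P(x,\Bscr)=1$ for all $x\in\Bscr$ with the level-set construction (which naturally produces only a $\pi$-a.e.\ invariant set), and supplying the measure-theoretic regularity — existence of the regular conditional laws $P_x$, measurability of $x\mapsto P_x(\Ascr)$, and the disintegration $P_\pi=\int P_\omega\,d\pi$ — that the compactness, hence Polishness, of $\Omega$ is there to guarantee.
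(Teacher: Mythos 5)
The paper does not prove this statement at all: it is imported verbatim from \cite{HLL98}, Theorem~6.1(b), and stamped with a \textsl{qed}. So your proposal cannot be compared to an in-paper argument, only judged on its own terms — and on those terms it follows the standard route (lift to the trajectory space, apply Birkhoff for the shift $\theta$ under $P_\pi:=\int_\Omega P_\omega\,d\pi$, show the shift-invariant $\sigma$-algebra $\mathcal{I}$ is $P_\pi$-trivial, then disintegrate $P_\pi=\int P_\omega\,d\pi$ to pass from a $P_\pi$-a.s.\ statement to $P_\omega$-a.s.\ for $\pi$-a.e.\ $\omega$), which is essentially how the cited source proceeds. Your first and third paragraphs are correct as written; in particular the final transfer step ($P_\pi(\Ascr_0)=1$ forces $P_\omega(\Ascr_0)=1$ for $\pi$-a.e.\ $\omega$) is exactly right, and the regularity worries are unproblematic: $P_x$ exists for every $x$ by the Ionescu--Tulcea construction from the kernel $P$ (compactness is not needed for this), measurability of $x\mapsto P_x(\Ascr)$ follows from a monotone-class argument starting from cylinder events, and $P_\pi=\int P_\omega\,d\pi$ is the definition.

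The one step to tighten is the sub-lemma in your middle paragraph, and your instinct about where the difficulty sits is accurate — but the fix is cleaner than a general ``bounded harmonic $\Rightarrow$ $\pi$-a.s.\ constant'' statement, which your super-level-set plan would not deliver as stated: for $t<1$ the set $\{h\ge t\}$ is \emph{not} invariant in the paper's strict sense, since $h(x)=\int P(x,dy)\,h(y)\ge t$ gives no pointwise control of $h$ on the support of $P(x,\cdot)$. Instead, use the special structure of $h(x):=P_x(\Ascr)$. First, $h$ is \emph{everywhere} harmonic (not just $\pi$-a.e.): $\Ascr=\theta^{-1}\Ascr$ and the Markov property give $h(x)=\int P(x,dy)\,h(y)$ for every $x$. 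Second, your martingale argument combined with stationarity pins down the law of $h$: $h(\omega_n)\to\satubb_{\Ascr}$ $P_\pi$-a.s.\ while $h(\omega_n)\overset{d}{=}h(\omega_0)$ for all $n$, so $h\in\{0,1\}$ $\pi$-a.e. Third — the key point — the single extreme level set $\Bscr:=\{x\in\Omega \mid h(x)=1\}$ \emph{is} invariant in the paper's everywhere sense: if $h(x)=1$, then $1=\int P(x,dy)\,h(y)$ with $0\le h\le 1$ everywhere forces $h=1$ $P(x,\cdot)$-a.s., i.e.\ $P(x,\Bscr)=1$. Ergodicity of $\pi$ then yields $\pi[\Bscr]\in\{0,1\}$, and since $P_\pi(\Ascr)=\int_\Omega h\,d\pi=\pi[\Bscr]$, the invariant $\sigma$-algebra $\mathcal{I}$ is $P_\pi$-trivial, collapsing the Birkhoff limit to $\int_\Omega f\,d\pi$ as you intended. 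With that substitution your outline becomes a complete and correct proof.
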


\smallskip

The following lemma will be useful for checking if a given stationary distribution $\pi$ is ergodic.
For any $n\geq 1$, we denote by $P^{(n)}$ the $n$-step transition function of the Markov chain $M$.

\smallskip

\begin{lemma}\label{lemma: invariance implies time invariance}
	Let  $M:=(\Omega, \Fscr,P)$  be a Markov chain, and let  $\pi$ be  a stationary distribution of $M$.
	If $\Bscr$ is an invariant set, 
	then the set 
	\[ \Bscr':= \{ x \in \Omega \mid \exists~n \geq 1 \text{ s.t. } P^{(n)}(x,\Bscr)>0 \},  \]
	differs from $\Bscr$ by a set of $\pi$-measure zero.
\end{lemma}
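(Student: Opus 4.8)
The plan is to prove the two inclusions separately, the easy one being $\Bscr \subseteq \Bscr'$. Indeed, for $x \in \Bscr$ invariance gives $P(x,\Bscr) = 1 > 0$, so taking $n=1$ shows $x \in \Bscr'$. Since $\Bscr \subseteq \Bscr'$, the claim reduces to showing $\pi[\Bscr' \setminus \Bscr] = 0$.

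The main tool will be a stationarity identity, for which I first need to upgrade the one-step invariance to all times. By induction on $n$, using that the kernel $P(x,\cdot)$ is concentrated on $\Bscr$ for $x \in \Bscr$ together with the Chapman--Kolmogorov relation, one obtains $P^{(n)}(x,\Bscr) = 1$ for every $x \in \Bscr$ and every $n \geq 1$. Now fix $n \geq 1$. Since $\pi$ is stationary for $P$, it is stationary for the $n$-step kernel $P^{(n)}$, so splitting the integral over $\Bscr$ and $\Bscr^c$ and using $P^{(n)}(x,\Bscr)=1$ on $\Bscr$ gives
\[ \pi[\Bscr] = \int_\Omega P^{(n)}(x,\Bscr)\, d\pi(x) = \int_\Bscr P^{(n)}(x,\Bscr)\, d\pi(x) + \int_{\Bscr^c} P^{(n)}(x,\Bscr)\, d\pi(x) = \pi[\Bscr] + \int_{\Bscr^c} P^{(n)}(x,\Bscr)\, d\pi(x). \]
Because the integrand is nonnegative, this forces $\int_{\Bscr^c} P^{(n)}(x,\Bscr)\, d\pi(x) = 0$, and hence $P^{(n)}(x,\Bscr) = 0$ for $\pi$-almost every $x \notin \Bscr$.

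It then remains only to assemble these facts. Writing $A_n := \{x \in \Bscr^c \mid P^{(n)}(x,\Bscr) > 0\}$, the previous paragraph shows $\pi[A_n] = 0$ for each $n \geq 1$. By the definition of $\Bscr'$ we have $\Bscr' \setminus \Bscr = \bigcup_{n \geq 1} A_n$, so countable subadditivity yields $\pi[\Bscr' \setminus \Bscr] = 0$, completing the argument. I do not anticipate a genuine obstacle here; the only points requiring care are the measure-theoretic bookkeeping, namely the inductive verification that $P^{(n)}(x,\Bscr) = 1$ on $\Bscr$ (which relies on $P(x,\cdot)$ being concentrated on $\Bscr$ and a Tonelli-type interchange in Chapman--Kolmogorov) and the passage from stationarity of $\pi$ under $P$ to stationarity under $P^{(n)}$. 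Both are routine.
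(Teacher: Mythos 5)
Your proof is correct and takes essentially the same route as the paper's: both exploit the stationarity identity $\pi[\Bscr]=\int_\Omega P^{(n)}(x,\Bscr)\,d\pi(x)$ together with $P^{(n)}(\cdot,\Bscr)\equiv 1$ on $\Bscr$ (a consequence of invariance) to force $\int P^{(n)}(x,\Bscr)\,d\pi(x)=0$ off $\Bscr$ for every $n\geq 1$, and then conclude via the definition of $\Bscr'$. Your explicit induction establishing $P^{(n)}(x,\Bscr)=1$ on $\Bscr$ and the countable-union bookkeeping with the sets $A_n$ simply make precise steps the paper leaves implicit.
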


\smallskip

\begin{proof}
	First note that $\Bscr \subseteq \Bscr'$ by the invariance of $\Bscr$.
	Now note that,
	for any $n \geq 1$,
	\begin{align*}
		\pi[\Bscr] &= \int_{\Omega} P^{(n)}(x, \Bscr) \ d\pi(x) \qquad \text{(by the stationarity of $\pi$)}\\
		&=\int_{\Bscr} P^{(n)}(x,\Bscr) \ d\pi(x) + \int_{\Bscr' \setminus \Bscr} P^{(n)}(x, \Bscr) \ d\pi(x) \qquad \text{(as $P^{(n)}(x, \Bscr) = 0$ for $x \notin \Bscr'$)}\\
		&= \pi[\Bscr] + \int_{\Bscr' \setminus \Bscr} P^{(n)}(x, \Bscr) \ d\pi(x) \qquad \text{(as $\Bscr$ is invariant).}
	\end{align*}
	Hence we conclude that $\int_{\Bscr' \setminus \Bscr} P^{(n)}(x, \Bscr) \ d\pi(x) = 0$ for any $n \geq 1$.
	It then follows from the definition of $\Bscr'$ that  that $\pi[\Bscr' \setminus \Bscr]=0$.
	This proves the lemma. 
\end{proof}

\medskip

\subsection{Proof of Theorem~\ref{theorem: intro ergodic} }\label{subsection: proof of ergodic native environment}

Recall  
the definition of 
scenery process $(\rel_n)_{n\geq 0}$ 
from Definition~\ref{definition: scenery process}.

\begin{definition}[Ergodic native environment]\label{definition: ergodic environment}
	Consider an RWLM  on a weighted Cayley graph that satisfies \eqref{equation: transitive} and \eqref{equation: c-stationary}.
	An \emph{ergodic environment} is a distribution on   rotor configurations of $G$
	that is an ergodic measure for the scenery process of the RWLM.
\end{definition}

We now restate Theorem~\ref{theorem: intro ergodic} from the introduction (also the main result of this section) in a slightly more general form.
Recall that the definition of probability transition functions
$p_x$ from Definition~\ref{definition: mechanism}. 
We say that the RWLM is \emph{elliptic} if,
\begin{equation}\label{equation: elliptic}
	p_x(y,y') \ > \ 0 \qquad \text{for every $x \in V$ and every $y,y' \in N(x)$}. \tag{Ell}
\end{equation}
Note that, from the RWLMs in \S\ref{rotor walks}, the Aldous-Broder walk and the $p,\!r$-rotor walk with $r<1$ are elliptic,
while $p$-rotor walk and  deterministic rotor walk are not elliptic.

\smallskip

\begin{reptheorem}{theorem: intro ergodic}\label{thmergodic}
	Consider an RWLM  on a weighted Cayley graph that satisfies \eqref{equation: transitive}, \eqref{equation: c-stationary}, and \eqref{equation: elliptic}.
Then   $\arrow{\wsf}^+_{\id}$ is an ergodic  native environment.
\end{reptheorem}

\smallskip

%We now build toward the proof of Theorem~\hyperref[thmergodic]{1.\thenumberergodic}.
%We start by showing that, assuming that $\pi$ is a stationary distribution of the scenery process, an  invariant event  is  a tail event $\pi$-a.s..\

\smallskip

%\begin{lemma}\label{lemma: tail triviality}
%	Consider an RWLM  on a weighted Cayley graph that satisfies \eqref{equation: transitive}, \eqref{equation: c-stationary}, and \eqref{equation: elliptic},
%	and suppose that  $\pi$ is a stationary distribution of the scenery process.
%	If $\Bscrl$ is a set of rotor configurations that is invariant w.r.t.\ the scenery process, then 
%	$\Bscrl$ differs from a tail event by a set of measure zero under $\pi$.
%\end{lemma}
%
%
%\smallskip

\begin{proof}
		Let $\pi= \arrow{\wsf}^+_{\id}$, and let $\Bscrl$ be  a set of rotor configurations that is invariant w.r.t.\ the scenery process.
	Recall the definition of tail event for rotor configurations (equivalently, oriented subgraphs)  from \S\ref{subsection: tail triviality}.
	It suffices to show that $\Bscrl$ differs from a tail event  by a set of $\pi$-measure zero,
	as it will then follow from
	% Proposition~\ref{lemma: wsfp is tail trivial} that $\arrow{\wsf}^+_{\id}[\Bscrl]\in \{0,1\}$.
	the tail triviality of $\pi$ (Lemma~\ref{lemma: wsfp is tail trivial}) that $\pi[\Bscrl] \in \{0,1\}$.
	
	Let $\Rot(G)$ denote the set of rotor configurations of $G$.
	We write 
	\[\Cscrl := \{\rho \in \Rot(G)  \mid   \exists~\rho' \in \Bscrl \text{ s.t. } \rho \text{ and } \rho' \text{ differ at finitely many vertices}\}.\]
	Note that $\Cscrl$ is  a tail event that contains $\Bscrl$.
	It then suffices to show that $\pi[\Cscrl \setminus \Bscrl]=0$.

	Let $\rho$ be any rotor configuration in $\Cscrl$. Then  there exists $\rho' \in \Bscrl$ such that $\rho'$ differs from $\rho$ at finitely many vertices. Let $\langle x_0,\ldots,x_n\rangle $ be a directed walk in $G$ that starts at $o$ and such that $\{x_0,\ldots,x_{n-1}\}$ contains all the vertices for which $\rho$ and $\rho'$ differ.

	For each $i \in \{1,\ldots,n\}$, define $\rho_i$ to be the rotor configuration at the $i$-th step of the RWLM if the initial walker-and-rotor configuration is $(x_0,\rho)$ and the trajectory of the walker for the first $i$ steps is given by $\langle x_0,\ldots,x_i\rangle$. 
	That is, these rotor configurations are given by the recursive definition
	\[\rho_{i+1}(x):=\begin{cases}x_{i+1} & \text{if } x = x_i; \\ \rho_i(x) & \text{otherwise.} \end{cases}\]
	Define $\rho'_i$ in a similar manner, but with $(x_0,\rho')$ as the initial walker-and-rotor configuration.
	Note that $\rho_n = \rho'_n$ since the walker of the RWLM that follows the directed walk $\langle x_0,\ldots,x_n\rangle$ would have visited and changed the rotors at all vertices for which $\rho$ and $\rho'$ differ;
	see Figure~\ref{figure: elliptic path}.
	
	\begin{figure}[t!]
		\begin{tabular}{c @{\hskip 40 pt} c  @{\hskip 40 pt} c  @{\hskip 40 pt} c }
			\includegraphics[scale=0.8]{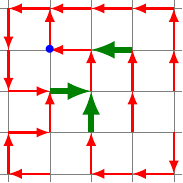} & 
			\includegraphics[scale=0.8]{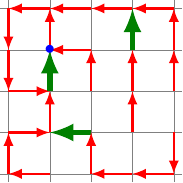} &
			\includegraphics[scale=0.8]{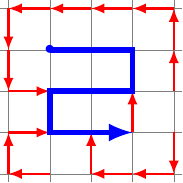}  &
			\includegraphics[scale=0.8]{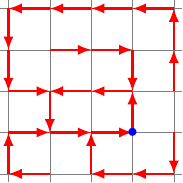} \\
			(a) & (b) & (c) & (d)
		\end{tabular}
		\caption{(a) and (b) Two rotor configurations that differ at finitely many vertices.
			The rotors at which they differ are drawn oversized in green.
			(c) The trajectory  (drawn in blue) taken by the walker that visits every green rotor. 
			(d) The final rotor configuration of the RWLM at the end of this process, which is the same regardless of whether the initial configuration is (a) or (b).
		}
		\label{figure: elliptic path}
	\end{figure}

	Write $\rho'':=\tau_{x_n^{-1}}(\rho_n)=\tau_{x_n^{-1}}(\rho_n')$.
	Note that $\rho''$ is the rotor configuration at the $n$-th step of the scenery process if the walker of the RWLM follows $\langle x_0,\ldots,x_n\rangle$ and the initial rotor configuration is $\rho$ or $\rho'$ (recall that $\tau_x$ is the network isomorphism of $(G,c)$ given by left multiplication by $x$). 
	In particular,  the probability to   transition from $\rho$ to $\rho''$ in $n$ steps of the scenery process satisfies the following inequality:
	\[P^{(n)}(\rho,\rho'') \geq \prod_{i=0}^{n-1} p_{x_i}(\rho_i(x_i),x_{i+1})>0,\]
	where the strict inequality is due to \eqref{equation: elliptic}.
	Note that, by the same argument, we also have
	$P^{(n)}(\rho',\rho'')>0$.
	
	Since $\rho' \in \Bscrl$ and $P^{(n)}(\rho',\rho'')>0$,
	we have $\rho''\in \Bscrl$ by the invariance of $\Bscrl$.
	This  implies that $\rho$ can transition into $\Bscrl$ in $n$ steps of the scenery process with positive probability, as
	\[ P^{(n)}(\rho,\Bscrl)\geq P^{(n)}(\rho,\rho'') >0.\]
	
	As the choice of $\rho \in \Cscrl$ is arbitrary, we have from the argument above that:
	\[ \Cscrl \subseteq \{ \rho \in \Rot(G) \mid \exists~n \geq 1 \text{ s.t. } P^{(n)}(\rho,\Bscrl)>0 \}.  \]
	Since $\pi$ is a stationary distribution of the scenery process (Theorem~{\hyperref[thmstationarity]{1.\thenumberstationarity}}),
	it then follows from 
	 Lemma~\ref{lemma: invariance implies time invariance} that the set on the right side of the equation differs from $\Bscrl$ by a set of $\pi$-measure zero.
	Hence we conclude that
	$\pi[\Cscrl \setminus \Bscrl]=0$, as desired.
\end{proof}

%In the next theorem, we show that, under some conditions on the mechanism, 
%%$\arrow{\wsf}^+_{\id}$ 
%any tail trivial stationary distribution of this Markov chain is 
%ergodic.
%an ergodic distribution.

%\begin{theorem}\label{theorem: elliptic rotor walk is ergodic}
%	Let $(G,c)$ be a weighted Cayley graph.
%	Suppose the mechanism of the RWLM satisfies \eqref{equation: transitive}, \eqref{equation: c-stationary}, and \eqref{equation: elliptic}. If $\pi$ is any tail trivial stationary distribution of the scenery process,
%	then 
%	$\pi$
%	%$\arrow{\wsf}^+_{\id}(G,c)$ 
%	is an ergodic measure for the scenery process of the RWLM. 
%\end{theorem}

%

%\smallskip
%\begin{proof}[Proof of Theorem~{\hyperref[thmergodic]{1.\thenumberergodic}}]
%	Let $\pi= \arrow{\wsf}^+_{\id}$, and let $\Bscrl$ be  a set of rotor configurations that is invariant w.r.t.\ the scenery process.
%	It follows from Lemma~\ref{lemma: tail triviality} that $\Bscrl$ differs from a tail event  by a set of $\pi$-measure zero.
%	It then follows from
%	% Proposition~\ref{lemma: wsfp is tail trivial} that $\arrow{\wsf}^+_{\id}[\Bscrl]\in \{0,1\}$.
%	the tail triviality of $\pi$ (Lemma~\ref{lemma: wsfp is tail trivial}) that $\pi[\Bscrl] \in \{0,1\}$.
%	This completes the proof.
%\end{proof}

\bigskip

\section{Functional CLT for RWLM}\label{SLLN}

In this section we present the proof of Theorem~\ref{theorem: intro scaling limit}.
An electrical network  $(G,c)$ is a \emph{weighted lattice graph} in $\R^d$ if $G$ is weighted Cayley graph such that $V(G)$ is a 
 subgroup of $\R^d$ with vector addition as the group operation. 
In this section we will assume that $G$ is a weighted lattice graph,
and that the walker is initially located at the origin $\nol:=(0,\ldots,0)$,
 unless stated otherwise.

We now restate Theorem~\ref{theorem: intro scaling limit} from the introduction in a slightly more general form.
Recall the definition the measure $\mu_x$ from \eqref{equation: mu}.
We denote by $\Gamma$ the matrix
 \begin{align*}
	\Gamma:= \sum_{\yb \in \Nbh(\nol)}  \mu_{\nol}(\yb)\, \yb \, \yb^\top.
\end{align*}
Recall that $X_n$ and $\rho_n$ $(n\geq 0)$ denotes the location of the walker and the rotor configuration at the $n$-th step of the RWLM, respectively, and  
 that $D_{\R^d}[0,\infty)$ denotes the Skorohod space  of $\R^d$-valued c\`{a}dl\`{a}g paths on $[0,\infty)$.

\smallskip

\begin{reptheorem}{theorem: intro scaling limit}
	\label{thmscaling limit}
	Consider an RWLM on a weighted lattice graph in $\R^d$  that is 
	\eqref{equation: transitive}, \eqref{equation: c-stationary}, and \eqref{equation: martingale}.
	Suppose that the initial environment $\pi$ is an ergodic native environment.
	Then, for   almost every environment sampled from $\pi$, 
%	\[  \frac{1}{\sqrt{n}}  ({X_{\lfloor nt \rfloor}})_{t \geq 0}    \overset{ n \to \infty}{ \Longrightarrow}  B(t).\]
%	where  $X_{\lfloor nt \rfloor}$ is the location of the walker at the $\lfloor nt \rfloor$-th step of the walk, and 
%	$B(t)$ is a   Brownian motion in $\R^d$ with diffusion matrix $\Gamma$.
the scaled walk $(\frac{1}{\sqrt{n}} X_{\lfloor nt\rfloor } )_{t \geq 0}$ converges weakly on $D_{\R^d}[0,\infty)$ to a Brownian motion with diffusion matrix $\Gamma$.
\end{reptheorem}

\smallskip

As a consequence of Theorem~\hyperref[thmscaling limit]{1.\thenumberscalinglimit}, 
the $p,\!r$-rotor walk on $\Z^d$ (Example~\ref{example: p,r-rotor walk}) with constant conductance, with $p=\frac{1}{2}$ and $r<1$,
and with  $\arrow{\wsf}^+_{\id}$ as the initial environment, converges weakly  on  $D_{\R^d}[0,\infty)$ to a Brownian motion with diffusion matrix $\frac{1}{d}I_{d}$.

\smallskip

 \begin{proof}[Proof of Theorem~{\hyperref[thmscaling limit]{1.\thenumberscalinglimit}}]
 Let $V_n:=X_{n+1}-X_n$ and   $\Fscr_n := \sigma(X_0,\dots,X_n,\rho_0,\dots,\rho_n)$.	
 It suffices to  verify that all conditions in Theorem~\ref{CLT} are satisfied. 
By using the same argument as in the proof of  
 Proposition~\hyperref[propscaling limit]{1.\thenumberprops}, 
we have that 
 $(X_n)_{n\geq 0}$
 is a square-integrable martingale process (as a consequence of \eqref{equation: martingale}), and that \eqref{CLT2} is satisfied.
 We omit the details for brevity.

We now verify \eqref{CLT1}.
% Recall that $p_{\nol}(\cdot,\cdot)$ denotes the transition probability function of the mechanism at the origin.
 Let $i\geq 0$. 
It follows from Definition~\ref{definition: random walk with local memory} and \eqref{equation: transitive} that 
\begin{align*}
 V_i=& X_{i+1}- X_{i}
= \sum_{\yb \in  \Nbh(\nol)} \satubb \{\rho_{i}(X_i)-X_i=\yb \}   \,  Y_{\yb,i},
\end{align*}
where 
  $Y_{\yb,i}$ is a random variable on neighbors of the origin sampled from $p_{\nol}(\yb, \cdot) $ independently of $\Fscr_i$.
Then, for any $n\geq 0$:
\begin{align}\label{eqreferee 1}
 \frac{1}{n}\sum_{i=0}^{n-1} \Cond{V_i V_i^\top}{\Fscr_i}
%=& \sum_{\yb \in  \Nbh(\nol)} \left( \lim_{n \to \infty}   \frac{1}{n}\sum_{k=0}^{n-1}  \satubb \{\rho_{k}(X_k)-X_k=\xb \} \E{Y_{\yb,k}\, Y_{\yb,k}^\top}\right) \\
=& \sum_{\yb \in  \Nbh(\nol)} \left(   \frac{1}{n}\sum_{i=0}^{n-1}  \satubb \{\rho_{i}(X_i)-X_i=\yb \}\right)\, \E{Y_{\yb,0}\, Y_{\yb,0}^\top}.
\end{align}
%Write $\mu_{\nol}(\xb):=\frac{c(\xb)}{\sum_{\yb \in \Nbh(\nol)} c(\yb)}$ ($\xb \in \Nbh(\nol)$).
Here we have used the fact that $Y_{\yb,i}$ has the same law as ${Y_{\yb,0}}$ for all $i$.

We now show that, for every 
 $\yb \in N(\nol)$, 
\begin{equation}\label{eqreferee 2}
	 \lim_{n \to \infty} \frac{1}{n} \sum_{i=0}^{n-1} \satubb \{\rho_{i}(X_i)-X_i=\yb \} \ = \  \mu_{\nol}(\yb).
\end{equation}
	Fix an ordering  $x_1,x_2,\ldots$ of $V(G)$. 
Note that the set of rotor configurations $\Rot(G)$ is a compact metric space with the metric
$d(\rho_1,\rho_2):=\sum_{i=1}^\infty\frac{1}{2^i} \satubb \{\rho_1(x_i)\neq \rho_2(x_i)\}$.
%Now recall the definition of  the $\sigma$-algebra $\Fscrl(G)$ from \S\ref{subsection: wsf oriented toward a root}.
It is straightforward to check that $\Fscrl(G)$ (from \S\ref{subsection: wsf oriented toward a root}) restricted to $\Rot(G)$
is  the Borel $\sigma$-algebra corresponding to this metric. 
Hence all conditions of Theorem~\ref{theorem: pointwise ergodic theorem} are satisfied, and 
%This means that we can apply  Theorem~\ref{theorem: pointwise ergodic theorem}  for which the conclusion of the proposition follows.
\eqref{eqreferee 2} now follows by applying 
Theorem~\ref{theorem: pointwise ergodic theorem} 
to the function $f:\Rot(G) \to \R$ given by
$f(\rel):=\satubb \{ \rel(\nol)=\yb  \}$.

%Plugging \eqref{eqreferee 2} into \eqref{eqreferee 1}, we get

Plugging \eqref{eqreferee 2} into \eqref{eqreferee 1}, we get
%it then follows from Lemma~\ref{lemma: condition CLT} that
\begin{align*}
\lim_{n \to \infty} \frac{1}{n}\sum_{i=0}^{n-1} \Cond{V_i V_i^\top}{\Fscr_i}
=& \sum_{\yb \in \Nbh(\nol) } \mu_{\nol}(\yb) \, \E{Y_{\yb,0}\, Y_{\yb,0}^\top} \\
=&\sum_{\yb \in \Nbh(\nol) }  \mu_{\nol}(\yb)\sum_{\yb' \in \Nbh(\nol)}  \, p_{\nol}(\yb,\yb') \, \yb'\, \yb'^\top.
\end{align*}
Since $\mu_{\nol}$ is a stationary distribution of the mechanism at $\nol$ by  \eqref{equation: c-stationary},
it then follows that:
\begin{align*}
\lim_{n \to \infty} \frac{1}{n}\sum_{i=0}^{n-1} \Cond{V_i V_i^\top}{\Fscr_i}
=&\sum_{\yb' \in \Nbh(\nol) }  \mu_{\nol}(\yb') \, \yb'\, \yb'^\top=\Gamma.
\end{align*}
 Hence \eqref{CLT1} is verified, and the proof is complete.
 \end{proof}
 
% 
% In this section   we prove a scaling limit for  a family of random walks with local memory (RWLM) that satisfy \eqref{equation: martingale} but not \eqref{equation: ILC} (from Proposition~\hyperref[propscaling limit]{1.\thenumberprops}).
% Our strategy is to pick the initial rotor configuration  from the  oriented wired spanning forest plus one edge, which is a native environment of the RWLM by Theorem~\hyperref[thmstationarity]{1.\thenumberstationarity}.
% This allows us to prove \eqref{CLT1} (from Theorem~\ref{CLT}) by applying the pointwise ergodic theorem.
% The scaling limit can then be derived from the martingale central limit theorem~(Theorem~\ref{CLT}).
 
 \bigskip

\section{Concluding remarks}\label{section: further questions}

We conclude with a few natural questions.

%\item Theorem~\ref{theorem: intro stationarity} shows that the oriented wired spanning forest plus one edge is  a native environment for any transitive RWLM.
%Another native environment for a rotor walk on $\Z^2$  is given in Figure~\ref{figure: scenery process}.
%On the other hand, one can show that $\arrow{\wsf}^+_{\nol}$ is the unique native environment for Aldous-Broder walk on $\Z^2$.
%Does the uniqueness of native environment  hold true for other transitive   elliptic  RWLMs (i.e., RWLMs that satisfy \ref{item: TR}, \ref{item: ELL}) on $\Z^d$ ($d\geq 2$)?
%If not, what is a  counterexample?

\subsection{} 
Theorem~\ref{theorem: intro scaling limit}
allows us to derive a functional CLT, but only when the initial environment is an ergodic native environment.
Does the conclusion of Theorem~\ref{theorem: intro scaling limit} still hold for other initial environments?
We believe that the answer to this question is positive for the iid initial environment, 
and simulations  suggests that there should be no quantitative difference between 
iid initial environment and wired spanning forest plus one edge environment eventually.
%This suggests the following problem, which is potentially of independent interest:
\begin{problem}
	Consider an RWLM  on a simple Cayley graph that is transitive, uniform, and elliptic.
	Let $(\rel_n)_{n \geq 0}$ be the scenery process of the RWLM with iid initial environment.
	Show that $\rel_n$ converges weakly to $\arrow{\mathsf{WUSF}}^+$, i.e.,  for every edge $\{x_1, y_1\},\ldots, \{x_m,y_m\}$ of $G$,
	\[  \mathbb{P} \big[ \rel_n(x_1)=y_1, \ldots, \rel_n(x_m)=y_m \big]   \ \overset{ n \to \infty}{ \longrightarrow} \ 
	\mathbb{P} \big[ (x_1, y_1), \ldots, (x_m,y_m)  \in \Ul \big],\]
	where  $\Ul$ is a random subgraph sampled from $\arrow{\mathsf{WUSF}}^+$.
\end{problem}

\subsection{}\label{subsection: dropping ellipticity} 
%It was shown in \cite{HLSH18} that the scaling limit for the $p$-rotor walk on $\Z$  is a Brownian motion perturbed at extrema~(see, e.g., \cite[]{}).
%
%
Can we drop the ellipticity assumption from Theorem~\ref{theorem: intro ergodic} and Theorem~\ref{theorem: intro scaling limit}?
%This condition is crucial in the proof of
%  Theorem~\ref{scaling limit for elliptic rotor walk} as it allows us to use ergodic theory.
% However,  there is no reason to expect that it is necessary to conclude the scaling limit of RWLMs.
In particular, a positive answer to this question will give us a scaling limit result for  $p$-rotor walk on $\Z^d$ $(d\geq 2)$ when $p=\frac{1}{2}$, which will be consistent with the simulation results in Figure~\ref{figure: simple random walk vs rotor walk}.
%Note that the ellipticity assumption is only used in the proof of  Lemma~\ref{lemma: tail triviality},
%and thus a  first step in this direction will be to prove a version of  the lemma without this assumption.

%On the other hand, it is unlikely that the condition that the RWLM is a martingale (i.e., $p=\frac{1}{2}$ for $p$-rotor walk) can be dropped from Theorem~\ref{theorem: intro scaling limit}.
%Indeed, it was shown in \cite{HLSH18}
%that the $p$-rotor walk on $\Z$ converges weakly to Brownian motion perturbed at extrema,
%and it is believed that 
%higher-dimensional analogues of the one-dimensional Brownian motion perturbed at extrema

% Note that the case $d=1$ is resolved in 
%\cite{HLS18}, where it is shown that  this RWLM is recurrent and that 
%$\left\{ \frac{1}{\sqrt{n}} X_{\lfloor nt\rfloor },t \geq 0\right\}$
%converges weakly to a 1-dimensional  Brownian motion perturbed at extrema.
%We are not aware of any definition of  $d$-dimensional  Brownian motion perturbed at extrema  for $d\geq 2$ in the literature.
%
%Indeed, simulations (see Figure~\ref{figure: simple random walk vs rotor walk}, left) suggest that the scaling limit of $p$-rotor walk  is \emph{not} equal to a Brownian motion when $p\neq \frac{1}{2}$.
%This in turn is in line with 

%SH seems to know how to prove this, and will write a sequel paper soon.

\subsection{}
 An RWLM is \emph{recurrent} if every vertex is visited infinitely often by the walker a.s.\ and is \emph{transient} otherwise.
Note  that every   $d$-dimensional RWLM  on $\Z^d$ satisfying conditions  in 
 Theorem~\ref{theorem: intro scaling limit}
is  transient if $d\geq 3$ (as the transience of the scaling limit implies the transience of the original walk).
Is it true that these RWLMs are recurrent if $d=2$? 
We remark that a partial answer to this question has been given in \cite{Cha20} (the sequel to this paper), namely for  the `H'--`V' walk on $\Z^2$ with i.i.d. initial environment and $p=\frac{1}{2}$, and remains open for other values of $p$.

%By Theorem~\ref{scaling limit for elliptic rotor walk}, if the RWLM starts at the origin 
%and satisfies  \ref{item: TR}, \ref{item: ELL} and \ref{item: RM}, then the trajectory of the walker  
%scales to a standard Brownian motion  
% for almost every initial rotor configuration picked from the distribution $\arrow{\wsf}^+_{\nol}(G,c)$.
%Can this be extended to include all initial rotor configurations?

%In particular, we believe that `H,V'-walk on $\Z^2$  in \S\ref{section: intro} are recurrent.
%
%\begin{conjecture}
%	Let $(X_n, \rho_n)_{n \geq 0}$ be the `H,V'-walk on $\Z^2$,
%	with $X_0=0$ and $\rho_0$ is the (random) rotor configuration 
%	 $\rho_0(\xb) \ := \ \xb+ \eb_{\xb}$,
%	where $(\eb_{\xb})_{\xb \in \Z^2}$ are the i.i.d. uniform random variable on 
%	 $\{\pm (1,0), \pm (0,1)\}$. 
%\end{conjecture}

%\item Consider  $p$-rotor walk on $\Z^d$ with  $p\in (0,1)$ and $d\geq 2$.
%What is the scaling factor for the trajectory $(X_n)_{n\geq 0}$  of the walker of this RWLM?
%What does its scaling limit look like?
%Is this RWLM recurrent?
%
% Note that the case $d=1$ is resolved in 
%\cite{HLS18}, where it is shown that  this RWLM is recurrent and that 
% $\left\{ \frac{1}{\sqrt{n}} X_{\lfloor nt\rfloor },t \geq 0\right\}$
%converges weakly to a 1-dimensional  Brownian motion perturbed at extrema.
%We are not aware of any definition of  $d$-dimensional  Brownian motion perturbed at extrema  for $d\geq 2$ in the literature.

 \section*{Acknowledgement}
We would like to thank Elena Kosygina, Yuval Peres and Ofer Zeitouni for inspiring discussions with the first author and the third author.
We would also like to thank Timo Sepp{\"a}l{\"a}inen for pointing us to references for Theorem~\ref{CLT}.
The first author would also like to thank Igor Pak for writing advice.
Last but not the least, we would like to thank the anonymous referees for the insightful comments that substantially improve the readability of the paper and for additional references.

\vskip.9cm

\appendix

\section{Random walks with hidden local memory}\label{section: hidden random walks}
In this section we present a more general version of random walk with local memory
  inspired by  hidden Markov chains. We refer to  \cite{Bil06} for a more detailed discussion on  hidden Markov chains.
%We remark that the content of this section is independent of the later sections.

For each $x \in V$, a \emph{hidden mechanism} at $x$ is a Markov chain $M_x$ with finite state space $S_x$ and probability transition function $p_x(\cdot,\cdot)$.
A \emph{jump rule} is a map  $f_x:S_x \to \mathcal{P}(\Nbh(x))$  from $S_x$ to the set of probability distributions on the set of neighbors of $x$. 
%That is, for each $s \in S_x$, $f_x(s)$ is a probability distribution on the neighbors of $x$. 
 A {\em hidden state configuration} is a map $\kappa: V \to \sqcup_{x \in V} S_x$ such that $\kappa(x) \in S_x$ for all $x \in V$.

\smallskip

\begin{definition}[Random walk with hidden local memory]\label{def: hidden walk}
A \emph{random walk with hidden local memory}, or RWHLM for short, is a sequence  
$(X_n, \rho_n, \kappa_n)_{n \geq 0}$ satisfying the following transition rules:
\begin{enumerate}
\item $
 \kappa_{n+1}(x):=\begin{cases} K_n & \text{if } x=X_n;\\
\kappa_{n}(x) & \text{if } x\neq X_n. \end{cases}$;
 
\item $ \rho_{n+1}(x):=\begin{cases} Y_n & \text{if } x=X_n;\\
\rho_{n}(x) & \text{if } x\neq X_n, \end{cases}$
\item   $X_{n+1}:=Y_n$,
\end{enumerate}
where $K_n$ is a random element of $S_{X_n}$ sampled from $p_{X_n}(\kappa_{n}(X_n), \cdot)$ 
independent of the past, and $Y_n$ is  a random neighbor of $x$  sampled from $f_{X_n}(K_n)$ independent of the past. 
\end{definition}

\smallskip

Described in words, 
at each time step  (i) the walker  first updates the hidden state 
of its current location using the given hidden mechanism.
 Then, (ii) the walker  updates the rotor  of its current location  by sampling the new rotor   from the probability distribution corresponding  to the new hidden state.
  Finally, (iii) the walker  travels to the vertex specified by  the new rotor.

\smallskip

\begin{example}[Hidden triangular walk]\label{ex: hidden triangular lattice} Let $G$ be the triangular lattice. 
For each $\xb \in V$, the hidden mechanism at $\xb \in V$
has the following state space and transition probability:
\begin{align*}
S_{\xb}:=\{ s_1,s_2,s_3 \};\qquad 
p_\xb :=\begin{bmatrix}
0 & \frac{1}{2} & \frac{1}{2}\\
0 & 0 & 1\\
1 &0 &0
\end{bmatrix}.
\end{align*}
That is, $s_1$ transitions to either $s_2$ or $s_3$ with equal probability, $s_2$ transitions to $s_3$ with probability 1, and $s_3$ transitions to $s_1$ with probability 1.

We now describe the  jump rule $f_\xb$.
Let $N_1 \sqcup N_2$ be the partition of  the neighbors $\Nbh(\xb)$ of $\xb$ given by:
\begin{align*}
N_1:= \xb +\left\{\begin{pmatrix}
1\\ 0
\end{pmatrix}, \frac{1}{2}\begin{pmatrix}
-{1}  \\ {\sqrt{3}}
\end{pmatrix} , \frac{1}{2}\begin{pmatrix}
-1 \\ {-\sqrt{3}}
\end{pmatrix}\right\};	\qquad N_2:=\xb+ \left\{\begin{pmatrix}
	-1\\0
	\end{pmatrix},\frac{1}{2}\begin{pmatrix}
	1 \\ {\sqrt{3}}
	\end{pmatrix},   \frac{1}{2}\begin{pmatrix}
	1\\ -\sqrt{3}
	\end{pmatrix}\right\}.
\end{align*}
  The distribution $f_\xb(s_1)$  is then given by the uniform distribution on $N_1$,
while  $f_\xb(s_2)$ and $f_\xb(s_3)$ are   the uniform distribution on $N_2$.

\begin{figure}[t!]
\includegraphics[scale=0.8]{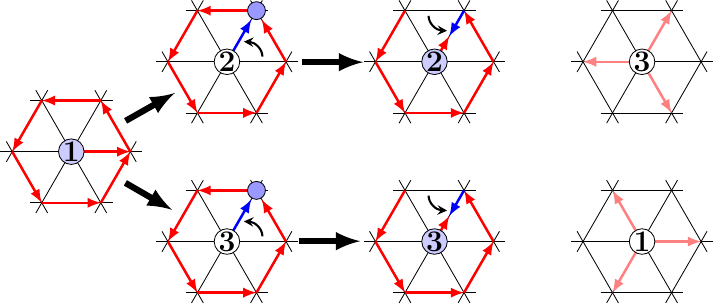} 
\caption{Two instances of a two-step hidden triangular walk with the same walker's trajectory and rotor configurations.
 The number at the origin records the hidden state of the origin.
The pictures at the right side illustrate the future hidden state of the origin and the arrows point to  (possible) future locations of the walker.
}
 \label{figure: hidden triangular walk}
\end{figure}

Without knowing the hidden states, an outside observer will not be able to predict the future dynamics of this RWHLM even while knowing the past and present location of the walker and rotor configuration, as illustrated in Figure~\ref{figure: hidden triangular walk}. 
\end{example}

\smallskip

Note that  a non-hidden RWLM is a  special case of  RWHLM,
with  $S_x$ ($x \in V$)  being  the set of neighbors of $x$ and with 
$f_x(y)$ $(y \in \Nbh(x))$ being  the probability distribution  concentrated on $y$.
On the other hand, 
every RWHLM on a simple graph $G$ can be emulated by a non-hidden RWLM  on a  larger graph (with multiple edges) in the following manner.

Let $G^\times$ be the undirected graph 
with 
vertex set $V(G)$
and with an edge   incident to  $x$ and $y$ in $G^\times$ for each $\{x,y\}\in E(G)$ and each hidden state $s \in S_x$ of the RWHLM. 
Such an edge is labeled $e({x,y,s})$.

For any $x \in V(G^\times)$, the  mechanism of this RWLM on $x$  is the Markov chain with state space the set of edges incident to $x$ in $G^\times$ (instead  of the set of neighbors of $x$), and with probability transition function
\[ p_x^\times(e({x,y,s}),e({x,y',s'})) := p_{x}(s,s')\,   (f_x(s'))(y'), \]
where $p_x$ and $f_x$ are the probability transition function and the jump rule for the RWHLM, respectively.

This RWLM on $G^\times$ emulates the RWHLM on $G$ in the following sense.
Let $(X_n,\rho_n,\kappa_n)_{n \geq 0}$ be an RWHLM on $G$.
Start an RWLM  $(X_n^\times, \rho_n^\times)_{n \geq 0}$ on $G^\times$ with the following initial configuration:
\begin{align*}
X_0^\times&:= X_0; \qquad \rho^\times_0(x):= e({x, \rho_0(x),   \kappa_0(x))} \quad (x \in V).
\end{align*} 
Then $(X_n, \rho_n)_{n \geq 0}$ is equal in distribution to 
 $(X_n^\times , h(\rho_n^\times))_{n \geq 0}$,
 where $h(\rho_n^\times)$ is the rotor configuration of $G$ 
 given by $h(\rho_n^\times)(x):=y$ if $\rho_n^\times(x)=e({x,y,s})$ for some $s \in S_x$.
 
As a consequence of this reduction, we can convert the hidden triangular walk from Example~\ref{ex: hidden triangular lattice} to a non-hidden random walk with local memory, and then apply a version of Proposition~\hyperref[propscaling limit]{1.\thenumberprops} for non-simple graphs to conclude that  the scaling limit of  this hidden triangular walk is a Brownian motion in $\R^2$.

%Since any RWHLM can be   emulated by an RWLM on another (nonsimple) graph,
%  the rest of the paper will be focused on  studying the latter (albeit on simple graphs).

\end{document}